\newtheorem{definition}{Definition}[section]
\newtheorem{proposition}{Proposition}[section]
\newtheorem{theorem}{Theorem}[section]
\newtheorem{remark}{Remark}[section]
\newcommand{\re}[1]{\mathfrak{{#1}}} 
\newcommand{\rer}[1]{\widetilde{\mathfrak{{#1}}}_r} 
\newcommand{\qdr}[1]{\tilde{{#1}}} 
\newcommand{\bv}[1]{\mathbf{{#1}}} 
\newcommand{\bal}[1]{\hat{#1}} 
\def\balT{\bv{F}} 
\def\TF{\mathbf{H}} 
\def\TFre{{\mathfrak{H}}} 
\def\spTF{{\mathbf{H}}_\infty} 
\def\TFzer{{\mathbf{H}}_0} 
\def\Kr{{\mathcal{K}}}
\def\Kre{{\mathfrak{K}}} 
\def\FF{{\re{\bv{F}}}} 
\def\qwe{{\varphi }} 
\def\qpo{{\zeta}} 
\def\IR{{\mathbb R}}
\def\IC{{\mathbb C}}
\newcommand{\bA}{{\textbf A}}
\newcommand{\bB}{{\textbf B}}
\newcommand{\bC}{{\textbf C}}
\newcommand{\bD}{{\textbf D}}
\newcommand{\bE}{{\textbf E}}
\newcommand{\bS}{{\textbf S}}
\newcommand{\bY}{{\textbf Y}}
\newcommand{\bL}{{\textbf L}}
\newcommand{\bI}{{\textbf I}}
\newcommand{\bH}{{\textbf H}}
\newcommand{\bP}{{\textbf P}}
\newcommand{\bQ}{{\textbf Q}}
\newcommand{\bW}{{\textbf W}}
\newcommand{\bZ}{{\textbf Z}}
\newcommand{\bX}{{\textbf X}}
\newcommand{\bx}{{\textbf x}}
\newcommand{\by}{{\textbf y}}
\newcommand{\bu}{{\textbf u}}
\newcommand{\bV}{{\textbf V}}
\newcommand{\bU}{{\textbf U}}
\newcommand{\cH}{ {\cal H} }
\newcommand{\np}{{{}{J}}} 
\newcommand{\nq}{{{}N_q}} 
\newcommand{\quadP}{\widetilde{\bP}}
\newcommand{\quadQ}{\widetilde{\bQ}}
\newcommand{\quadU}{\widetilde{\bU}}
\newcommand{\quadL}{\widetilde{\bL}}
\newcommand{\quadZ}{\widetilde{\bZ}}
\newcommand{\quadS}{\widetilde{\bS}}
\newcommand{\quadY}{\widetilde{\bY}}
\newcommand{\quadLb}{\widetilde{\bold{T}}}
\newcommand{\quadcU}{\widetilde{\bv{G}}}
\newcommand{\quadLL}{\widetilde{\bv{N}}}
\newcommand{\quadMM}{\widetilde{\bv{M}}}
\newcommand{\requadLb}{\widetilde{\re{T}}}
\newcommand{\requadcU}{\widetilde{\re{G}}}
\newcommand{\requadLL}{\widetilde{\re{N}}}
\newcommand{\requadMM}{\widetilde{\re{M}}}
\newcommand{\requadU}{\widetilde{\re{U}}}
\newcommand{\requadL}{\widetilde{\re{L}}}
\newcommand{\requadP}{\widetilde{\re{P}}}
\newcommand{\requadQ}{\widetilde{\re{Q}}}
\newcommand{\ROM}{\textsf{ROM}\xspace}
\newcommand{\FOM}{\textsf{FOM}\xspace}
\newcommand{\LTI}{\textsf{LTI}\xspace}
\newcommand{\QBT}{\textsf{QuadBT}\xspace}
\newcommand{\BT}{\textsf{BT}\xspace}
\newcommand{\SPA}{\textsf{SPA}\xspace}
\newcommand{\QSPA}{\textsf{QuadSPA}\xspace}
\newcommand{\imunit}{{\dot{\imath\hspace*{-0.2em}\imath}}}
\newcommand{\build}{\textsf{LAbuild}\xspace}
\newcommand{\iss}{\textsf{ISS12A}\xspace}
\newcommand{\rail}{\textsf{Rail}\xspace}
\tikzstyle{arr}=[-latex, black, line width=0.5pt]
\tikzstyle{doublearr}=[latex-latex, black, line width=0.5pt]
\tikzstyle{input}=[font=\small\sffamily\bfseries]
\tikzstyle{rect}=[
\tikzstyle{rect2}=[
\author[$\ast$]{Bj\"orn Liljegren-Sailer}
\affil[$\ast$]{Trier University,\authorcr
	Universit\"atsring 15, 54296 Trier, Germany.\authorcr
  \email{bjoern.sailer@uni-trier.de}, \orcid{0000-0002-5267-7801}}
\author[$\dagger$]{Ion Victor Gosea}
\affil[$\dagger$]{Max Planck Institute for Dynamics of Complex Technical Systems,\authorcr
	Sandtorstr. 1, 39106 Magdeburg, Germany.\authorcr
  \email{gosea@mpi-magdeburg.mpg.de}, \orcid{0000-0003-3580-4116}}
\title{Data-driven and low-rank implementations of Balanced Singular Perturbation Approximation}  
\shorttitle{Data-driven and low-rank implementations of SPA}
\keywords{Singular perturbation approximation, balanced truncation, numerical quadrature, data-driven modeling, non-intrusive methods, linear systems low-rank implementation, transfer function, realization free, system Gramians.}
\abstract{
Balanced Singular Perturbation Approximation (\SPA) is a model order reduction method for linear time-invariant systems that guarantees asymptotic stability and for which there exists an a priori error bound. In that respect, it is similar to Balanced Truncation (\BT). However, the reduced models obtained by \SPA generally introduce better approximation in the lower frequency range and near steady-states, whereas \BT is better suited for the higher frequency range. Even so, independently of the frequency range of interest, \BT and its variants are more often applied in practice, since there exist more efficient algorithmic realizations thereof.

In this paper, we aim at closing this practically-relevant gap for \SPA. We propose two novel and efficient algorithms that are adapted for different settings. Firstly, we derive a low-rank implementation of \SPA that is applicable in the large-scale setting. Secondly, a data-driven reinterpretation of the method is proposed that only requires input-output data, and thus, is realization-free.
A main tool for our derivations is the reciprocal transformation, which induces a distinct view on implementing the method. While the reciprocal transformation and the characterization of \SPA is not new, its significance for the practical algorithmic realization has been overlooked in the literature. Our proposed algorithms have well-established counterparts for \BT, and as such, also a comparable computational complexity. The numerical performance of the two novel implementations is tested for several numerical benchmarks, and comparisons to their counterparts for \BT as well as the existing implementations of \SPA are made.
}
\begin{document}

\maketitle

\date{\today}

\begin{keywords}
balanced singular perturbation approximation, balanced truncation, numerical quadrature, data-driven modeling, non-intrusive methods, linear systems, low-rank implementation, realization-free
\end{keywords}

\section{Introduction} 
\label{intro}

For many modern applications in the applied sciences, the simulation, control and optimization of very large-scale dynamical systems is required.  The increased development of modern computing environments and high-performance computing tools has pushed the boundaries of what is computationally feasible in this regard. Another way to go about this issue and to save computational time is by my means of approximating such large-scale dynamical systems with reduced models. This is the essence of model reduction, which is a sub-field at the intersection of many established fields, such as automatic control, systems theory, approximation theory, as well as numerical linear algebra; see e.g., the following books \cite{ACA05,quarteroni2015reduced,BOCW17,HandbookVol1,AntBG20}.

For the reduction of linear time-invariant systems, system-theo{\-}re{\-}tic methods are widely used. Moment matching methods and projection-based balancing-related methods, such as Balanced Truncation (\BT), are among the most popular methods falling in this category, see \cite{breiten20212,benner2021model} and references therein. Moment matching methods are particularly efficient in terms of computational complexity. The resulting reduced models are also typically of high fidelity, but it is in general difficult to enforce theoretical guarantees.

The balancing-related methods preserve important qualitative features of a system, such as stability, and there exists a priori error bounds for those methods \cite{glover1984all}. The concept behind \BT, originally proposed in \cite{mullis1976synthesis,moore1981principal}, is to identify and truncate components of the original system that are weakly controllable and observable. This turns out to be a natural approach assuming the truncated states have a fast dynamic.
Another closely related method is the (Balanced) Singular Perturbation Approximation (\SPA) \cite{fernando1982singular,liu1989singular}. It is a non-projection based balancing-related method, for which the same a priori guarantees as for \BT are valid. Other than that, \SPA is better suited for the lower frequency range and near steady states.

Notably, the numerical algorithms available for \BT are more developed than those for \SPA. For example, low-rank implementations are exclusive to \BT and thus \SPA, until now, could not be applied in the truly large-scale setting; cf.\,\Cref{sec:LowRnkImp} for references and more details.
The recent contribution \cite{gosea2022data} connects classical projection-based \BT approach to data-driven interpolation-based approach and provides a non-intrusive implementation of the method.
With the ever-increasing availability of data, i.e., measurements related to the original model, the interest in non-intrusive methods has grown significantly. Among other methods, we would like to mention the Loewner framework  \cite{ajm07}, Dynamic Mode Decomposition \cite{schmid_2010} and Operator Inference \cite{PEHERSTORFER2016196,morBenGKetal20}.

The main objective of this paper is twofold: first, we aim at deriving an efficient algorithm for \SPA in a large-scale setting and, second, at providing a realization-free implementation of \SPA. 
The latter will be performed in a non-intrusive manner, using solely transfer function evaluations and corresponding quadrature weights.
Both new implementations have strong resemblances with existing implementations of \BT. The essential link between \BT and \SPA is given by the so-called reciprocal transformation, and this plays a crucial role in our derivations.

The remainder of the paper is organized as follows:~\Cref{sec:BT} provides an overview on balancing and the balancing-related model reduction methods \BT and \SPA and points towards the open issues for the implementation of \SPA.

Then, in \Cref{sec:recTrafo}, the reciprocal transformation is defined and analyzed in a slightly generalized setting as compared to the literature. In \Cref{sec:LowRnkImp} we recall the square-root and low-rank implementations of \BT and then, present a novel counterpart for \SPA. Numerical results for the novel low-rank algorithm are provided. Next, \Cref{sec:QuadBT} introduces in detail the data-driven interpretation of \BT, and, by that, sets the stage for the newly-proposed data-driven implementation of \SPA. Moreover, connections to the Loewner framework are discussed. An extensive numerical study for the data-driven methods is provided in \Cref{sec:Numerics2} for two benchmark examples. Finally, in \Cref{sec:Conclusion}, the conclusions of the paper are outlined, together with a brief outlook to future research endeavors.

\section{Balancing and balancing-related model reduction} \label{sec:BT}

Consider the linear time-invariant (\LTI) dynamical system
\begin{align}
\begin{split} \label{DesSys}
\bE \, \dot{\bx}(t) &= \bA \, \bx(t) + \bB \, \bu(t), \\
\by(t) &= \bC \, \bx(t) + \bD \,\bu(t),
\end{split}
\end{align}
where the input mapping is given by $\bu : [0,\infty] \rightarrow \mathbb{R}^m$, the (generalized) state variable is
$\bx: [0,\infty] \rightarrow \mathbb{R}^n$, and the output mapping is $\by : [0,\infty] \rightarrow \IR^p$. The system matrices are given by $\bE, \bA \in \mathbb{R}^{n \times n}$, and  $\bB\in \mathbb{R}^{n\times m}$, $\bC \in \mathbb{R}^{p\times n}$, $\bD \in \IR^{p \times m}$. The system is closed by choosing initial conditions $\bx(0) = \bv{x}^0 \in \IR^n$.
We assume the system to be asymptotically stable, i.e., $\bE$ and $\bA$ are nonsingular, and the eigenvalues of $\bE^{-1} \bA$ are located in the open left half-plane. In general, the state dimension $n$ may be large, and \eqref{DesSys} will also be referred to as full order model (\FOM).

The input-output behavior of the system, for $\bx(0) = \bv{0}$, and in the frequency domain, is fully characterized by the transfer function, given by
\begin{equation}\label{eq:TF_def}
\TF(s)=\bC (s \bE -\bA)^{-1} \bB + \bD, \qquad  \TF(s) \in \IC^{p\times q},
\end{equation}
for any scalar (frequency) $s \in \IC$.

Model reduction aims at computing a \ROM, i.e., a \LTI system of a reduced state dimension $r\ll n$, that is supposed to reproduce a similar output $\by_r \approx \by$ for the same inputs. This \ROM is given by
\begin{align}
\begin{split} \label{ROMSys}
\dot{\bx}_r(t) &= \bA_r\, \bx_r(t) + \bB_r\, \bu(t), \\
\by_r(t) &= \bC_r \, \bx_r(t) + \bD_r \, \bu(t),
\end{split}
\end{align}
with initial conditions $\bx_r(0) = \bx_r^0 \in \IR^r$, and reduced dimension matrices $\bA_r \in \IR^{r \times r}$, $\bB_r \in \IR^{r \times m}$, $\bC_r \in \IR^{p \times r}$ and $\bD_r \in \IR^{p \times m}$. Arguably, the projection-based approach is most commonly used \cite{ACA05,benner2015survey}, in which reduction bases $\bW_r, \bV_r \in \IR^{n \times r}$ with $\bW_r^T \bE \bV_r = \bI_r \in \IR^{r \times r}$ (unit matrix)  are used to construct the reduced model. In that case $\bA_r = \bW_r^T \bA \bV_r$, $\bB_r = \bW_r^T \bA$, $\bC_r = \bC \bV_r$ and $\bD = \bD_r$.

\subsection{Balanced realization}

The two fundamental quantities in \BT and \SPA are the controllability  and observability Gramians $\bP$ and $\bQ$, given as the solutions of the two Lyapunov equations
\begin{align}  \label{lyapFacDes}
\bA \bP \bE^T + \bE \bP \bA^T + \bB\bB^T = \bv{0}, \hspace{1cm}
\bA^T \bQ \bE + \bE^T \bQ \bA + \bC^T \bC = \bv{0}. 
\end{align}
The Gramian $\bP$ induces a measure for the controllability of a state, and likewise, $\bQ$ a measure for the observability. However, these measures are not invariant under state transformations.
For, let a matrix $\balT \in \IR^{n\times n}$ be given with $\balT \bE \balT^{-1} = \bI$, and define the transformed matrices $\bal{\bA} = \balT \bA \balT^{-1}$, $\bal{\bB} = \balT \bB$, and $\bal{\bC} = \bC \balT^{-1}$.
Then the input-output behavior of \eqref{DesSys} is equivalently described by the alternative \LTI realization
\begin{align*}
\begin{split}  
\dot{\bal{\bx}}(t) &= \bal{\bA}\, \bal{\bx}(t) + \bal{\bB}\, \bu(t), \hspace{1.2cm}
\by(t) = \bal{\bC} \, \bal{\bx}(t) + \bD \, \bu(t), \vspace{0.5cm}
\end{split}
\end{align*}
and $\bal{\bv{x}}(0) =  \balT \bv x_0$. A realization is called \emph{balanced}, when its Gramians are diagonal and equal to each other. More specifically, we require for the transformed Gramians $\bal{\bP}$ and $\bal{\bQ}$ that it holds
\begin{align*}
  \bal{\bP} =  \balT \bP \balT^T \stackrel{!}{=}  \bal{\bQ} = \balT^{-T} \bQ \balT^{-1} 
 \stackrel{!}{=} \text{diag}(\sigma_1,\ldots,\sigma_n)  
 \in \IR^{n\times n},
\end{align*}
with $\sigma_1 \geq \sigma_2 \geq \ldots \geq \sigma_n \geq 0$ sorted in a descending order. The $\sigma_i$'s are invariants of the \LTI system (they are the same for any realization of the state-space), and referred to as the Hankel singular values.

\subsection{Balancing-related model reduction}

Balancing-related model reduction methods construct a reduced model based on the most observable and most controllable states, which relate to the dominant few Hankel singular values. Let $1<r<n$ and $\sigma_r > \sigma_{r+1}$ and the balanced state $\bal{\bx} =  \balT \bx$ be partitioned as $\bal{\bx} = [\bal{\bx}_1^T,\bal{\bx}_2^T]^T$,
 with $\bal{\bx}_1: [0,\infty] \to \IR^r$ and the remainder $\bal{\bx}_2: [0,\infty] \to \IR^{n-r}$.
The transformed system matrices are partitioned accordingly,
\begin{align*}
\bal{\bA} &=
 \begin{bmatrix}
\bal{\bA}_{11} & \bal{\bA}_{12} \\
\bal{\bA}_{21} & \bal{\bA}_{22}
\end{bmatrix},
\qquad 
\bal{\bB} = 
 \begin{bmatrix}
\bal{\bB}_{1} \\
\bal{\bB}_{2} 
\end{bmatrix}, \qquad
\bal{\bC} 
= 
\begin{bmatrix}
\bal{\bC}_{1} & \bal{\bC}_{2} 
\end{bmatrix},
\end{align*}
and thus, the following relations hold
\begin{subequations}\label{eq:balSys}
\begin{align}
 \dot{\bal{\bx}}_{1}(t) &= \bal{\bA}_{11} \bal{\bx}_{1}(t) + \bal{\bA}_{12} \bal{\bx}_2(t) + \bal{\bB}_{1} \bu(t)  \label{eq:balSys-a},\\
  \dot{\bal{\bx}}_{2}(t) &= \bal{\bA}_{21} \bal{\bx}_1(t) + \bal{\bA}_{22} \bal{\bx}_2(t) + \bal{\bB}_{2} \bu(t) \label{eq:balSys-b},\\
  \by(t) &= \bal{\bC}_{1} \bal{\bx}_{1}(t)+  \bal{\bC}_{2} \bal{\bx}_{2}(t) +\bD \bu(t). \label{eq:balSys-c}
\end{align}
\end{subequations}
For both \BT and \SPA, reduced models are constructed  by a simplification of \eqref{eq:balSys-a} which can be expressed in terms of the variable $\bx_{r} \approx \bal{\bx}_1$.

\paragraph{Balanced truncation}

In \BT, the reduced model is obtained by completely eliminating the second state, i.e., replacing \eqref{eq:balSys-b} with the simplified dynamics $\bal{\bx}_2 \approx \bv{0}$. The ensuing \ROM is given as in \eqref{ROMSys}, with the following choice of reduced matrices
\begin{align*}
 \bA_r = \bal{\bA}_{11} , \hspace{1cm} 
  \bB_r = \bal{\bB}_{1} , \hspace{1cm}
  \bC_r = \bal{\bC}_{1} , \hspace{1cm}
  \bD_r = \bD.
\end{align*}
This approximation assumes $\bal{\bx}_2$ consists of fast dynamics that decay rapidly to zero.

\paragraph{Singular perturbation approximation}

\SPA pursues, in some sense, a converse path from that of \BT. The remainder state is approximated using the steady-state equation $\frac{d}{dt}\bal{\bx}_2(t)\approx \bv{0}$ for $t\geq 0$. Thus, equation \eqref{eq:balSys-b}  is modified to
$
\bv 0 \approx \bal{\bA}_{21} \bal{\bx}_1(t) + \bal{\bA}_{22} \bal{\bx}_2(t) + \bal{\bB}_{2} \bu(t)
$,
which implies the approximation $\bal{\bv{x}}_2(t)\approx -\bal{\bA}_{22}^{-1} (\bal{\bA}_{21} \bal{\bx}_1(t) + \bal{\bB}_{2} \bu(t))$. A reduced formulation with $\bx_r(t) \approx \bal{\bx}_1(t)$ enforcing this approximated dynamics can be obtained by basic manipulations. This yields the matrices of the \SPA reduced system, which read
\begin{align}
\begin{aligned} \label{eq:spa-matrices}
 \bA_r &= \bal{\bA}_{11} - \bal{\bA}_{12} \bal{\bA}_{22}^{-1} \bal{\bA}_{21} , \hspace{1cm} 
  & \bB_r = \bal{\bB}_{1} - \bal{\bA}_{12} \bal{\bA}_{22}^{-1} \bal{\bB}_{2} , \\
 \bC_r &= \bal{\bC}_{1} - \bal{\bC}_{2} \bal{\bA}_{22}^{-1}  , \hspace{1cm} 
  &\bD_r = \bD - \bal{\bC}_{2} \bal{\bA}_{22}^{-1} \bal{\bB}_{2}. \hspace{0.3cm} 
\end{aligned}
\end{align}
Note that this method leads, in general, to a feedthrough term $\bD_r \neq \bv{0}$, even if $\bD $ was equal to zero. Thus, the \SPA method is not a projection-based approach.

\subsection{Open issues for the implementation of Singular Perturbation Approximation} \label{subsec:efficient-bal-related}
Unfortunately, the numerical determination of a balanced realization is computationally very demanding and not well-conditioned for large-scale systems. 
Therefore, balancing-free implementations have been proposed for various balancing-related model reduction methods. 
Efficient implementations for dense systems of medium size have been derived based on the sign function method and similar techniques, both, for \BT (see \cite{morBenQQ00,book:dimred2003, ACA05}) as well as for \SPA \cite{Varga91,benner2000singular, baur2008gramian}.

Other significant results and application areas have been exclusively developed for \BT and other pro{\-}jection-based balancing-related methods. It is to be noted that the existing implementations of \SPA explicitly use formula \eqref{eq:spa-matrices}, which inherently prohibits the use of low-rank approximations. Furthermore, there has not been proposed a data-driven (realization-free) implementation of \SPA yet in literature. In this paper, we aim at filling this practically-relevant gap for \SPA by deriving low-rank and realization-free algorithms.

\section{Reciprocal transformation} \label{sec:recTrafo}

It is well-known that the \SPA can be interpreted in terms of the so-called reciprocal transformation \cite{liu1989singular,book:green2012linear}. While this interpretation has been used for the theoretical investigations of \SPA, its relevance for the algorithmic implementation was not further investigated so far.
In this section, we propose a definition of the reciprocal transformation in a slightly modified setting, considering \LTI systems with general matrices $\bE$ as opposed to the special case $\bE = \bI$ used in literature. We show that our extension inherits the fundamental properties that have been derived  in \cite{liu1989singular,book:green2012linear,guiver2018generalised} for the special case.

\begin{definition}\label{def:ReciprocalDeSys} 
For a \LTI system as in \eqref{DesSys}, the reciprocal system, i.e., the reciprocal transformed counterpart of the \LTI system, is defined as
\begin{align*}
\re{E} \, \dot{\re{x}}(t) &= \re{A} \, \re{x}(t) + \re{B} \, \re{u}(t). \\
\re{y}(t) &= \re{C} \, \re{x}(t) + \re{D} \,\re{u}(t), \\[-1cm]
\end{align*}
\begin{align*}
 \hspace{0.3em} \text{with }\hspace{0.3em}& \re{A} = \bE\bA^{-1} \bE,  &\re{B} = \bE \bA^{-1} \bB ,\hspace{0.8cm} &     \re{E} = \bE,\hspace{2cm}  \\
 & \re{C} = -\bC \bA^{-1} \bE,  &\re{D} =  \bD - \bC \bA^{-1}  \bB.  &
\end{align*}
The reciprocal transfer function is defined by  $\TFre(s) = \re{C} (s \re{E} - \re{A})^{-1} \re{B} + \re{D}$ for $s \in \IC$.
\end{definition}

From a theoretical point of view, one could directly invert the matrix $\bE$ to end up in the standard setting. However, this step needs to be avoided for the efficient implementation of \SPA in the general large-scale setting, cf.~\Cref{sec:LowRankSPA} below.

\begin{proposition}\label{lem:recInverse} 
The reciprocal transformed system of an asymptotically-stable \LTI system as in \eqref{DesSys} is well-defined and asymptotically stable. Moreover, by applying twice the reciprocal transformation to a \LTI system, the \LTI system is obtained.
\end{proposition}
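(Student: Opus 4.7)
The plan is to verify the two claims separately, with the main work being a careful bookkeeping of the matrix algebra.

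For \emph{well-definedness and asymptotic stability}, I would first note that by assumption both $\bE$ and $\bA$ are nonsingular, so all the inverses appearing in \Cref{def:ReciprocalDeSys} exist; in particular $\re{A} = \bE\bA^{-1}\bE$ is nonsingular and $\re{E}=\bE$ is nonsingular, so the reciprocal pencil is regular. For asymptotic stability I would look at the spectrum of $\re{E}^{-1}\re{A}$. Using $\re{E}=\bE$, we have
\begin{equation*}
\re{E}^{-1}\re{A} \;=\; \bE^{-1}\bigl(\bE\bA^{-1}\bE\bigr) \;=\; \bA^{-1}\bE \;=\; \bigl(\bE^{-1}\bA\bigr)^{-1},
\end{equation*}
so the eigenvalues of $\re{E}^{-1}\re{A}$ are the reciprocals of those of $\bE^{-1}\bA$. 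The elementary observation that $\mathrm{Re}(\lambda)<0$ implies $\mathrm{Re}(1/\lambda)=\mathrm{Re}(\lambda)/|\lambda|^2<0$ then transfers the open-left-half-plane spectrum of $\bE^{-1}\bA$ to $\re{E}^{-1}\re{A}$, giving the claimed stability.

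For the \emph{involution property}, the main tool is the identity $\re{A}^{-1}=\bE^{-1}\bA\bE^{-1}$, obtained by inverting the defining product $\re{A}=\bE\bA^{-1}\bE$. Applying \Cref{def:ReciprocalDeSys} to the already reciprocal system and substituting this identity, I would check the five matrices one by one:
\begin{align*}
\re{\re{E}} &= \re{E} = \bE,\\
\re{\re{A}} &= \re{E}\,\re{A}^{-1}\,\re{E} = \bE\bigl(\bE^{-1}\bA\bE^{-1}\bigr)\bE = \bA,\\
\re{\re{B}} &= \re{E}\,\re{A}^{-1}\,\re{B} = \bE\bigl(\bE^{-1}\bA\bE^{-1}\bigr)\bigl(\bE\bA^{-1}\bB\bigr) = \bB,\\
\re{\re{C}} &= -\re{C}\,\re{A}^{-1}\,\re{E} = -\bigl(-\bC\bA^{-1}\bE\bigr)\bigl(\bE^{-1}\bA\bE^{-1}\bigr)\bE = \bC,
\end{align*}
and finally for the feedthrough term
\begin{equation*}
\re{\re{D}} = \re{D}-\re{C}\,\re{A}^{-1}\,\re{B} = \bigl(\bD-\bC\bA^{-1}\bB\bigr) + \bC\bA^{-1}\bE\,\bE^{-1}\bA\bE^{-1}\,\bE\bA^{-1}\bB = \bD,
\end{equation*}
so the two added correction terms cancel exactly.

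I do not expect any real obstacle here; the argument is essentially bookkeeping once the key identity $\re{A}^{-1}=\bE^{-1}\bA\bE^{-1}$ is isolated. The only mildly nontrivial part is the stability statement, where one has to observe that the generalized-eigenvalue problem for $(\re{A},\re{E})$ is governed by the \emph{inverse} spectrum of $(\bA,\bE)$, and then invoke the reciprocal-preserves-left-half-plane fact. I would remark at the end that the involution also entails $\TFre$ being well-defined and satisfying $\widetilde{\TFre}=\TF$, which will be exploited in the subsequent sections.
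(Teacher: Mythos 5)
Your proposal is correct and follows the same route the paper intends: the paper dispatches this proposition with the remark that it ``follows by straightforward calculus,'' and your computation is precisely that calculation carried out in full, with the key identity $\re{A}^{-1}=\bE^{-1}\bA\bE^{-1}$ and the observation that $\re{E}^{-1}\re{A}=(\bE^{-1}\bA)^{-1}$ has reciprocal (hence still left-half-plane) spectrum. All five matrix verifications check out, so nothing further is needed.
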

The latter result follows by straightforward calculus. Next, the relation in the frequency domain between the two transfer functions is provided; this will clarify the origin of the term "reciprocal".
\begin{theorem} 
The transfer function $\TF$ of the \LTI system in \eqref{DesSys} and the reciprocal transfer function $\TFre$ are reciprocal to each other, in the sense that for any $s \in \IC^+ $, it holds
\begin{align*}
	\TFre(s) = \TF\left( \frac{1}{s} \right) \hspace{1.0cm} \text{and, respectively,} \hspace{1cm} \TFre \left( \frac{1}{s} \right) = \TF(s).
\end{align*}
\end{theorem}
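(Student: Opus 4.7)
The plan is to start from the definition of $\TFre(s)$, substitute the five formulas for $\re{A}, \re{B}, \re{C}, \re{D}, \re{E}$ from Definition~3.1, and reshape the resolvent $(s\re{E} - \re{A})^{-1}$ into one involving the matrix pencil $(1/s)\bE - \bA$. The first identity is the nontrivial one, and the second will then follow immediately from Proposition~3.2 (applying the reciprocal transformation twice returns the original system), because swapping the roles of $\TF$ and $\TFre$ is the same as replacing $s$ by $1/s$.

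First, I would factor the resolvent as
\begin{align*}
 s \re{E} - \re{A} \;=\; s\bE - \bE\bA^{-1}\bE \;=\; \bE\bA^{-1}\bigl(s\bA - \bE\bigr),
\end{align*}
so that $(s\re{E} - \re{A})^{-1} = (s\bA - \bE)^{-1}\bA\bE^{-1}$. Inserting this into $\TFre(s) = \re{C}(s\re{E}-\re{A})^{-1}\re{B} + \re{D}$, the factors $\bA\bE^{-1}$ and $\bE\bA^{-1}$ coming from the right-hand side collapse, leaving
\begin{align*}
 \TFre(s) \;=\; -\bC\bA^{-1}\bE\,(s\bA - \bE)^{-1}\bB + \bD - \bC\bA^{-1}\bB.
\end{align*}
Then I would replace $(s\bA - \bE)^{-1} = -\tfrac{1}{s}\bigl((1/s)\bE - \bA\bigr)^{-1}$, so that
\begin{align*}
 \TFre(s) \;=\; \tfrac{1}{s}\,\bC\bA^{-1}\bE\bigl((1/s)\bE - \bA\bigr)^{-1}\bB + \bD - \bC\bA^{-1}\bB.
\end{align*}

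The main algebraic step, which I expect to be the only slightly delicate calculation, is to split $\bC\bA^{-1}\bE$ via the resolvent identity
\begin{align*}
 \tfrac{1}{s}\bA^{-1}\bE \;=\; \bI + \bA^{-1}\bigl((1/s)\bE - \bA\bigr),
\end{align*}
which one verifies by a direct expansion. Multiplying on the left by $\bC$, on the right by $\bigl((1/s)\bE - \bA\bigr)^{-1}\bB$, and substituting back yields
\begin{align*}
 \tfrac{1}{s}\,\bC\bA^{-1}\bE\bigl((1/s)\bE - \bA\bigr)^{-1}\bB \;=\; \bC\bigl((1/s)\bE - \bA\bigr)^{-1}\bB + \bC\bA^{-1}\bB.
\end{align*}
Plugging this into the previous display, the two copies of $\bC\bA^{-1}\bB$ cancel, and one obtains exactly $\TFre(s) = \bC\bigl((1/s)\bE - \bA\bigr)^{-1}\bB + \bD = \TF(1/s)$, which is the first identity.

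For the second identity, I would invoke Proposition~3.2: the reciprocal of the reciprocal system is the original \LTI system, hence $\widetilde{\TFre} = \TF$. Applying the first identity (already proved) to the reciprocal system at the point $1/s$ gives $\TF(1/s) = \widetilde{\TFre}(1/s) = \TFre(s)$, equivalently $\TFre(1/s) = \TF(s)$. One should briefly check that the argument is valid on $s \in \IC^+$, using that asymptotic stability of both the original and reciprocal systems (again Proposition~3.2) guarantees invertibility of $s\bE - \bA$ and $s\re{E} - \re{A}$ on the required domain, so no resolvents are evaluated at singular points during the manipulation.
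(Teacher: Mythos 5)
Your proposal is correct and follows essentially the same route as the paper: a direct resolvent manipulation establishing $\TFre(s)=\TF(1/s)$, with the second identity obtained by invoking the involution property of the reciprocal transformation (Proposition~3.2). The only difference is cosmetic — you start from $\TFre(s)$ and work toward $\TF(1/s)$ via the factorization $s\re{E}-\re{A}=\bE\bA^{-1}(s\bA-\bE)$, whereas the paper starts from $\TF(1/s)$ and expands $(\tfrac{1}{s}\bE-\bA)^{-1}$ in terms of the reciprocal system's resolvent.
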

\begin{proof}
We first prove that $(1/s \bE - \bA)^{-1} = (-\re{E}^{-1} \re{A}) [ \re{E}^{-1} +  (s \re{E}-  \re{A})^{-1} \re{A} \re{E}^{-1}]$ as a preliminary step. The latter holds due to
\begin{align*}
(1/s \bE - \bA)  (-\re{E}^{-1} \re{A}) \bigl[ \re{E}^{-1} +  (s \re{E} -  \re{A})^{-1} \re{A} \re{E}^{-1} \bigr] \hspace{4.3cm} \\
= 1/s (s \re{E} - \re{A}) \bigl[ \re{E}^{-1} +  (s \re{E} -  \re{A})^{-1} \re{A} \re{E}^{-1} \bigr] 
 =  1/s  [  (s \re{E} - \re{A})  \re{E}^{-1} + \re{A} \re{E}^{-1} ]  =\bI.
\end{align*}
With this equality, it follows that
\begin{align*}
\TF({1}/{s} ) &=  \bD+ \bC ( {1}/{s} \bE - \bA)^{-1}\bB  \\
 &= \bD+ \bC (-\re{E}^{-1} \re{A}) \bigl[ \re{E}^{-1} +  (s \re{E} -  \re{A})^{-1} \re{A} \re{E}^{-1} \bigr] \bB \\
 &= [\bD - \bC \re{E}^{-1} \re{A}  \re{E}^{-1} \bB] - \bC [\re{E}^{-1} \re{A}  (s \re{E} -  \re{A})^{-1} \re{A} \re{E}^{-1} ] \bB \\
 &= \re{D} + [-\bC \re{E}^{-1} \re{A} ] (s \re{E} -  \re{A})^{-1} [\re{A} \re{E}^{-1}  \bB]  =  \re{D}  + \re{C} \bigl(s \re{E} - \re{A} \bigr)^{-1} \re{B} = \TFre(s).
\end{align*}
The other relation follows by interchanging the role of the \LTI and the reciprocal system, cf. \Cref{lem:recInverse}.
\end{proof}
The main results of this paper are based on the following alternative characterization of \SPA that is also  graphically illustrated in \Cref{fig:comDiagramSPA}.

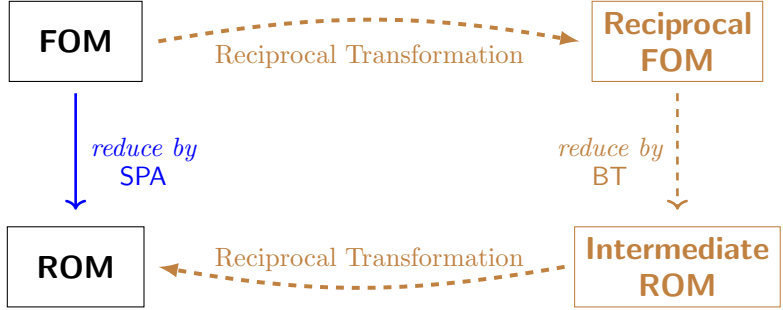
\begin{figure}[tb]
	\caption{Commutative diagram and alternative characterization of \SPA (following the brown dashed route) that is stated in \Cref{lem:SPAbyBT}. \label{fig:comDiagramSPA}}

\begin{tikzpicture}[auto]
\node [rect]                     (SFOM) at (1.5, 3)  {\text{\FOM}};
\node [rect]                     (SROM) at (1.5, 0)  {\text{\ROM}};

\node [rect2,align=center,brown]                     (SreFOM) at (9.5, 3)  {\text{Reciprocal} \\ \text{\FOM}};
\node [rect2,align=center,brown]                     (SreROM) at (9.5, 0)  {\text{Intermediate} \\ \text{\ROM}};

\node [align=center]                     (tBT) at (2.4, 1.4)  {\textit{\color{blue}reduce by}\\[-0.1em] {\color{blue}\SPA}};
\node [align=center]                     (tBT) at (8.6, 1.4)  {\textit{\color{brown}reduce by}\\[-0.1em] {\color{brown}\BT}};
\draw [blue, line width=0.35mm, ->] (SFOM.south)+(0,-0.15) -- +(0,-1.7cm) node [pos=0.5, above, black] {};
\draw [dashed,brown, line width=0.35mm, ->] (SreFOM.south)+(0,-0.15) -- +(0,-1.7cm) node [pos=0.5, above, black] {};
\Edge[Direct,bend=10, color = brown, style = dashed](2.6, 3)(8.2, 3);
\Edge[Direct,bend=10, color = brown, style = dashed](8.0, 0)(2.6, 0);
\node [align=center,brown]                     (tBT) at (5.4, 2.8)  {Reciprocal Transformation};
\node [align=center,brown]                     (tBT) at (5.4, 0.1)  {Reciprocal Transformation};

\end{tikzpicture}

\end{figure}

\begin{proposition}\label{lem:SPAbyBT}
For an asymptotically-stable \LTI system with Hankel singular values fulfilling $\sigma_r > \sigma_{r+1}$, the \ROM of dimension $r$ obtained by \SPA, defined in \cref{eq:spa-matrices}, can be equivalently obtained by a procedure that incorporates the next steps:
\begin{enumerate}[i$_)$]
	\item Apply the reciprocal transformation to the \FOM.
	\item Construct an intermediate reduced model of dimension $r$ for the reciprocal system, using \BT.
	\item Apply the reciprocal transformation to the intermediate reduced model.
\end{enumerate}
\end{proposition}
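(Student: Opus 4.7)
The plan is to verify the commutative diagram in \Cref{fig:comDiagramSPA} by reducing the assertion to block-matrix algebra built on the Schur complement. The key enabler is that the reciprocal transformation preserves both system Gramians, so any balancing transformation for the \FOM simultaneously balances the reciprocal \FOM, and both routes of the diagram can be computed in a common set of balanced coordinates.

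First I would verify that the Gramians $\bP$ and $\bQ$ defined in \eqref{lyapFacDes} coincide with those of the reciprocal system. This is a short calculation: multiplying the first Lyapunov equation on the left by $\bE\bA^{-1}$ and on the right by $\bA^{-T}\bE^T$ (and dually for $\bQ$) produces exactly the Lyapunov equations associated with $(\re{A},\re{B},\re{C},\re{E})$. As a consequence, both systems share the same Hankel singular values, the gap $\sigma_r>\sigma_{r+1}$ is inherited, and any balancing $\balT$ for the \FOM also balances the reciprocal \FOM to the same diagonal form.

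Next, I would work in these common balanced coordinates (so that $\bal{\bE}=\bI$) and expand the intermediate \ROM produced by \BT applied to the reciprocal \FOM. In balanced coordinates the reciprocal matrices simplify to $\bal{\bA}^{-1}$, $\bal{\bA}^{-1}\bal{\bB}$, $-\bal{\bC}\bal{\bA}^{-1}$ and $\bD-\bal{\bC}\bal{\bA}^{-1}\bal{\bB}$, and their top-left $r$-blocks are obtained from the block-inversion (Schur complement) formula. Setting
\begin{align*}
\bS=\bal{\bA}_{11}-\bal{\bA}_{12}\bal{\bA}_{22}^{-1}\bal{\bA}_{21},\quad \balpha=\bal{\bB}_{1}-\bal{\bA}_{12}\bal{\bA}_{22}^{-1}\bal{\bB}_{2},\quad \bbeta=\bal{\bC}_{1}-\bal{\bC}_{2}\bal{\bA}_{22}^{-1}\bal{\bA}_{21},
\end{align*}
one obtains the intermediate \ROM
\begin{align*}
\tilde{\bA}_r=\bS^{-1},\quad \tilde{\bB}_r=\bS^{-1}\balpha,\quad \tilde{\bC}_r=-\bbeta\bS^{-1},\quad \tilde{\bD}_r=\bD-\bal{\bC}_{2}\bal{\bA}_{22}^{-1}\bal{\bB}_{2}-\bbeta\bS^{-1}\balpha.
\end{align*}

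The final step applies the reciprocal transformation once more, this time to the $r$-dimensional intermediate \ROM. Plugging these matrices into \Cref{def:ReciprocalDeSys} and using $\tilde{\bA}_r^{-1}=\bS$ directly yields $\bA_r=\bS$, $\bB_r=\balpha$, $\bC_r=\bbeta$, while the quadratic term $-\bbeta\bS^{-1}\balpha$ in $\tilde{\bD}_r$ is annihilated by the feedthrough correction $-\tilde{\bC}_r\tilde{\bA}_r^{-1}\tilde{\bB}_r$, so that $\bD_r=\bD-\bal{\bC}_{2}\bal{\bA}_{22}^{-1}\bal{\bB}_{2}$. These four matrices coincide precisely with the \SPA formulas \eqref{eq:spa-matrices}. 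The main obstacle in carrying out this plan is the Schur-complement bookkeeping needed to expand the blocks of $\bal{\bA}^{-1}$, $\bal{\bA}^{-1}\bal{\bB}$, $\bal{\bC}\bal{\bA}^{-1}$ and $\bal{\bC}\bal{\bA}^{-1}\bal{\bB}$ simultaneously; once that identity is in place, the closing cancellation is in effect also a consequence of the involutive property established in \Cref{lem:recInverse} applied at the reduced level.
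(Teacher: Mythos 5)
Your proof is correct, and it is genuinely more than what the paper itself supplies: the paper does not prove \Cref{lem:SPAbyBT} at all, but instead cites \cite{liu1989singular, guiver2018generalised} for the case $\bE=\bI$ and then argues that the general-$\bE$ case follows because balancing-related reductions are realization-independent. Your argument makes the whole chain explicit and self-contained. The two ingredients you use are exactly the right ones: (a) the Gramian-preservation property (which the paper states separately as \Cref{lem:recGramians}), which guarantees that a balancing transformation for the \FOM also balances the reciprocal \FOM, so that \BT of the reciprocal system in those coordinates is literal truncation of the leading blocks of $\bal{\bA}^{-1}$, $\bal{\bA}^{-1}\bal{\bB}$, $-\bal{\bC}\bal{\bA}^{-1}$; and (b) the block-inversion/Schur-complement identities, which I have checked and which do produce $\tilde{\bA}_r=\bS^{-1}$, $\tilde{\bB}_r=\bS^{-1}\balpha$, $\tilde{\bC}_r=-\bbeta\bS^{-1}$, $\tilde{\bD}_r=\bD-\bal{\bC}_2\bal{\bA}_{22}^{-1}\bal{\bB}_2-\bbeta\bS^{-1}\balpha$, after which the second reciprocal transformation cancels the $-\bbeta\bS^{-1}\balpha$ term and returns precisely \eqref{eq:spa-matrices}. (Your $\bC_r=\bbeta=\bal{\bC}_1-\bal{\bC}_2\bal{\bA}_{22}^{-1}\bal{\bA}_{21}$ is the standard \SPA output matrix; the factor $\bal{\bA}_{21}$ is missing from \eqref{eq:spa-matrices} as printed, which appears to be a typographical slip in the paper, not an error in your derivation.) Two small points you leave implicit and should state: the invertibility of $\bal{\bA}_{22}$ (guaranteed by asymptotic stability of the truncated balanced subsystem under $\sigma_r>\sigma_{r+1}$, and needed both for the Schur complement and for \eqref{eq:spa-matrices} itself), and the remark that working in balanced coordinates is without loss of generality because the composite procedure is realization-independent at the transfer-function level. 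What your route buys is a verifiable, elementary computation in place of an external citation; what the paper's route buys is brevity and an immediate reduction of the general-$\bE$ case to the classical one.
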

The proof of the proposition can be found in \cite{liu1989singular, guiver2018generalised} for the special case $\bE = \bI$. Since the {\ROM} obtained by a balancing-related method does not depend on the state realization of the \LTI system, it is clear that the proposition also holds for our consistently extended definition of a reciprocal system.

The last result presented here provides the relation between the Gramians of an \LTI and those of the reciprocal system.
\begin{proposition} \label{lem:recGramians} 
The controllability and observability Gramians of the \LTI system coincide with those of its reciprocal system in \Cref{def:ReciprocalDeSys}.
\end{proposition}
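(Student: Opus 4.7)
The plan is to verify the claim by direct substitution into the generalized Lyapunov equations that define the Gramians of the reciprocal system, and then exploit the Lyapunov equations satisfied by the original Gramians $\bP$ and $\bQ$. Uniqueness of solutions to Lyapunov equations for asymptotically stable systems (guaranteed for the reciprocal system by \Cref{lem:recInverse}) will then complete the argument.

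More concretely, denote by $\re{P}$ and $\re{Q}$ the controllability and observability Gramians of the reciprocal system, characterized through
\begin{align*}
\re{A}\re{P}\re{E}^T+\re{E}\re{P}\re{A}^T+\re{B}\re{B}^T=\bv 0,\qquad
\re{A}^T\re{Q}\re{E}+\re{E}^T\re{Q}\re{A}+\re{C}^T\re{C}=\bv 0.
\end{align*}
First I would substitute $\re{P}\leftarrow\bP$ into the left equation and replace $\re{A},\re{B},\re{E}$ by their definitions from \Cref{def:ReciprocalDeSys}. This gives a sum of three terms that can be uniformly factored as $\bE\bA^{-1}[\,\cdot\,]\bA^{-T}\bE^T$, and a short calculation shows the bracket reduces to $\bA\bP\bE^T+\bE\bP\bA^T+\bB\bB^T$, which vanishes by the original Lyapunov equation \eqref{lyapFacDes}. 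Hence $\bP$ solves the reciprocal controllability Lyapunov equation, and by uniqueness $\re{P}=\bP$.

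For the observability Gramian, the analogous substitution $\re{Q}\leftarrow\bQ$ leads (after expanding $\re{A}$ and $\re{C}$) to three terms; these can be uniformly factored as $\bE^T[\,\cdot\,]\bE$, and the bracket becomes $\bA^{-T}\bE^T\bQ+\bQ\bE\bA^{-1}+\bA^{-T}\bC^T\bC\bA^{-1}$. Premultiplying by $\bA^T$ and postmultiplying by $\bA$ then recovers the original Lyapunov equation for $\bQ$, which vanishes. Again by uniqueness, $\re{Q}=\bQ$.

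I expect the only subtle point to be the bookkeeping in the two factoring steps (getting the correct sandwich that pulls the common matrices out cleanly), together with noting that all relevant inverses exist because \eqref{DesSys} is asymptotically stable and hence $\bA,\bE$ are nonsingular. The rest is routine. This also gives an immediate conceptual explanation of \Cref{lem:SPAbyBT}: since the Gramians, and therefore the balancing transformation and the Hankel singular values, are invariant under the reciprocal transformation, performing \BT on the reciprocal system and transforming back is consistent with truncating the balanced original system in the singular-perturbation sense.
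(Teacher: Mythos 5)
Your proposal is correct and follows essentially the same route as the paper's proof: substitute the original Gramians into the reciprocal Lyapunov equations, factor out the invertible matrices $\bE\bA^{-1}$ (respectively their transposes) to recover the original Lyapunov equations \eqref{lyapFacDes}, and conclude by uniqueness of solutions. The bookkeeping in both factoring steps checks out, so no further changes are needed.
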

\begin{proof}
It can be proven that the Lyapunov equations of the \LTI system and those of the reciprocal system are equivalent. Since these equations have unique solutions, this implies the equality of the Gramians. For example, the Lyapunov equation characterizing the controllability Gramian $\re{P}$ of the reciprocal system reads, by definition,
$
\re{A} \re{P} \re{E}^T + \re{E} \re{P} \re{A}^T + \re{B}\re{B}^T = \bv{0}.
$
This is equivalent to
\begin{align*}
(\bE \bA^{-1} \bE) \re{P} \bE ^T + \bE  \re{P} (\bE \bA^{-1} \bE)^T +  (\bE \bA^{-1} ) \bB\bB^T (\bE \bA^{-1} )^T = \bv{0}.
\end{align*}
By multiplying the latter equation with $(\bE \bA^{-1})^{-1}$ from the left and $(\bE \bA^{-1})^{-T}$ from the right, the first Lyapunov equation in \eqref{lyapFacDes} is obtained, which implies $\re{P}= \bP$. The equality of the  observability Gramians for the \LTI system and the reciprocal system can be shown to hold true in a similar manner.
\end{proof}

\section{Low-rank implementations of balancing-related methods}
\label{sec:LowRnkImp}

In this section, we describe the typical strategies for implementing a low-rank variant of \BT, based on the balancing-free square-root method. Afterwards, our first main result is provided, consisting of the new balancing-free implementation of \SPA. Its main advantage is that it allows for a low-rank variant similar to its \BT counterpart. The gained computational efficiency is illustrated at a large-scale numerical example.

\subsection{Low-rank implementation of Balanced Truncation} \label{subsec:BT-lowRank}

Due to the asymptotic stability of the original \LTI system (i.e., the \FOM), the Gramians are symmetric, positive semi-definite matrices. Thus, one may compute a factorization
\begin{equation} \label{eq:lyapFact}
\bP = \bU \bU^T \quad \mbox{and} \quad \ \bQ = \bL \bL^T
\end{equation}
with  $\bL, \bU \in \mathbb{R}^{n \times n}$, e.g., via a Cholesky factorization.
This provides the essential elements for an implementation of \BT by the square-root method outlined in~\Cref{alg:origbt}. It is to be noted that, instead of first computing the Gramians explicitly, and then the factors $\bL$ and $\bU$, one can instead compute the latter quantities directly, by means of the Hammarling method implemented in the Matlab function \textsf{lyapchol} or by a sign-function based solver, cf.,~\cite{morBenQQ00}. This circumvents potential numerical issues that may arise when computing Cholesky factors of already computed Gramians.
Another option for optimizing the computational steps is to replace the true Cholesky factors in \eqref{eq:lyapFact} by approximated low-rank factors, i.e., $\bP \approx \breve{\bU} \breve{\bU}^T$ and  $\bQ \approx \breve{\bL} \breve{\bL}^T$, with $\breve{\bU} \in \mathbb{R}^{n \times r_U}, \breve{\bL} \in \mathbb{R}^{n \times r_L}$, which have significantly less columns than the square-root factors, i.e,. $r_L, r_U \ll n$. Then, the computation a full SVD for a large-scale matrix is averted, since 	$\bL^T \bE \bU \in \mathbb{R}^{n \times n}$ is replaced by $\breve{\bL}^T \bE \breve{\bU} \in \mathbb{R}^{r_L \times r_U}$. Such low-rank  factors of the Gramians can be computed by means of, among others, Lyapunov equation iterative solvers based on the matrix sign function, on low-rank Alternating Direction Implicit (ADI) methods, Smith-type methods, etc.; a more detailed analysis on such approaches can be found in \cite{penzl1999cyclic,li2002low,benner2013efficient,ValSim16}.

The low-rank adaption of the square-root implementation of \BT is remarkably straight forward: the exact factors of the Gramians have to be replaced by the approximated low-rank factors in ~\Cref{alg:origbt}, see, e.g., \cite{morBenQQ00,morGugL05,baur2008gramian}. This very simple adaptation follows essentially from the projection-based nature of \BT. The underlying projection spaces consisting of the most observable and controllable states are efficiently approximated by the low-rank factors.
The practical relevance of the low-rank implementations is not to be underestimated, as large-scale sparse systems appear in various applications from computational fluid dynamics and mechanical, electronic, chemical or civil engineering. Specific applications case are numerous, ranging from microelectronics, aerodynamics, acoustics, to electromagnetics, neuroscience, or chemical process optimization, to enumerate only a few; see, e.g., ~\cite{book:dimred2003,BOCW17,morBenGQetal21b}.

\begin{algorithm}[tbhp] 
\footnotesize 
	\caption{Balanced truncation (\BT) (square-root/low-rank implementation)}  
	\label{alg:origbt}                                        
	\algorithmicrequire~\LTI system described by matrices $\bE , \bA \in \mathbb{R}^{n \times n}, \ \bB\in \mathbb{R}^{n \times m}$, $\bC\in \mathbb{R}^{p \times n}$, $\bD\in \mathbb{R}^{p \times m}$.
	
	\algorithmicensure~\ROM:   $\bA_r \in \mathbb{R}^{r \times r},  \bB_r\in \mathbb{R}^{r \times m}, \bC_r \in \mathbb{R}^{p \times r}$, and \,$\bD_r \in \mathbb{R}^{p \times m}$.
	
	\begin{algorithmic} [1]                                        
		\STATE \label{computeUL} Compute the  Lyapunov factors $\bU, \bL \in \mathbb{R}^{n\times n}$ from \cref{eq:lyapFact} and pick a 
		reduced dimension $1\leq r\leq \min(\mathsf{rank}(\bU),\mathsf{rank}(\bL))$.
		\STATE  \label{svdstep} Compute the SVD of the matrix $ \bL^T \bE \bU$, partitioned as follows 
		\begin{equation*} 
		\bL^T \bE \bU = \left[ \begin{matrix}
		\bZ_1 & \bZ_{2}
		\end{matrix}  \right] \left[ \begin{matrix}
		\bS_1 & \\[1ex] & \bS_{2}
		\end{matrix}  \right] \left[ \begin{matrix}
		\bY_1^T \\[1ex] \bY_{2}^T
		\end{matrix}  \right],
		\end{equation*}
		where $\bS_1 \in \mathbb{R}^{r \times r} \ \mbox{and} \ \bS_{2} \in \mathbb{R}^{(n-r) \times (n-r)}$.
		\STATE \label{WrVrstep}  
		Construct the model reduction bases $\bW_r = \bL \bZ_1 	\bS_1^{-1/2}$ and $\bV_r = \bU \bY_1 	\bS_1^{-1/2}$
		\STATE \label{project}  The reduced-order system matrices are given by
		\begin{align*}
		\bA_r = \bW_r^T \bA \bV_r , \hspace{0.2cm} \bB_r = \bW_r^T \bB , \hspace{0.2cm} \bC_r = \bC \bV_r , \hspace{0.2cm} \text{and} \hspace{0.2cm} \bD_r =\bD.
		\end{align*}
	\end{algorithmic}
\end{algorithm}

\subsection{Low-rank implementation of Singular Perturbation Approximation} \label{sec:LowRankSPA}

In contrast to \BT, the reduction by \SPA has no direct interpretation in the projection framework. Moreover, from the characterization \eqref{eq:spa-matrices}, which is explicitly used in the algorithms proposed in  \cite{benner2000singular, baur2008gramian,Varga91}, it was not clear how to explicitly derive a low-rank implementation of \SPA.

We therefore consider the alternative procedure suggested by \Cref{lem:SPAbyBT} for the derivation of our new algorithm. In the upcoming, a basic non-efficient variant is sketched. Then, the novel efficient implementation, which allows for a low-rank adaption, is derived from the latter by a few crucial modifications.

The basic procedure suggested by \Cref{lem:SPAbyBT} reads as follows. First the reciprocal transformation is applied to the \FOM to obtain the reciprocal  system $(\re{A},\re{B}, \re{C}, \re{D}, \re{E})$. Then, the reduction bases $\re{W}_r, \re{V}_r \in \IR^{n \times r}$ for the reciprocal system are determined (using \BT on the reciprocal system), and the intermediate \ROM is  obtained by a projection step, i.e.,
\begin{align} \label{eq:spaProj}
	\re{A}_r  = \re{W}_r^T \re{A} \re{V}_r, \qquad  \re{B}_r  = \re{W}_r^T \re{B}, \qquad \re{C}_r  = \re{C} \re{V}_r, \hspace{0.2cm} \text{and} \hspace{0.2cm} \re{D}_r = \re{D}.
\end{align}
As a final step, the reciprocal transformation is applied to the intermediate reduced model $(\re{A}_r,\re{B}_r, \re{C}_r, \re{D}_r)$; cf.\,the last dashed brown error in \Cref{fig:comDiagramSPA}.

The computational issue with the latter procedure is that it requires the reciprocal transformation of the \FOM, implying an inversion of the \FOM matrix $\bA$. Further note that the reciprocal system has dense state matrices even if the \FOM is sparse. Thus, the procedure has to be modified in such a way that the explicit determination of the full-order reciprocal system is omitted.

A first crucial observation in this direction is that the construction of $\re{W}_r$ and $\re{V}_r$ does not depend on the reciprocal system itself but only on its Gramians. By \Cref{lem:recGramians}, the reciprocal system has the same Gramians as the \FOM and, thus, the reduction bases for applying \BT to the reciprocal system are the same as for applying \BT to the \FOM, i.e., $\re{W}_r = \bW_r$ and $\re{V}_r=\bV_r$  (with $\bW_r$ and $\bV_r$ as in \Cref{alg:origbt}).
The second observation is that the projection step \eqref{eq:spaProj} for the intermediate \ROM does not require the full reciprocal system either but can be obtained from solving a system of linear equations of moderate size. For example, the construction of the state matrix  $\re{A}_r$ of the intermediate \ROM can be done according to
\begin{align} 
\begin{aligned} \label{eq:spaPstep}
 \re{A}_r  &= \re{W}_r^T \re{A} \re{V}_r = \bW_r^T  \re{A}   \bV_r  =  \bW_r^T \bE \bA^{-1} \bE  \bV_r \\
& \quad  \Longleftrightarrow \re{A}_r =\bW_r^T \bE \re{A}_V , \qquad \text{with $\re{A}_V$ solving }   \bA \re{A}_V = \bE \bV_r.
\end{aligned}
\end{align}
The efficient construction of the other matrices $\re{B}_r$, $\re{C}_r$ and $\re{D}_r$ follows similarly.

The new and efficient implementation of \SPA, which follows by taking into account all the previously mentioned steps, is summarized in \Cref{alg:spaSR}. Similarly as for \BT, one can replace the exact factors of the Gramians ($\bL$ and $\bU$) in step 1 of the algorithm with low-rank approximate factors to obtain an efficient implementation for the large-scale setting.

\begin{algorithm}[htp] 
\footnotesize 
	\caption{Singular Perturbation Approximation (\SPA) (square-root/low-rank)}  
	\label{alg:spaSR}                                        
	\algorithmicrequire~\FOM realization: $\bE, \bA \in \mathbb{R}^{n \times n}, \ \bB\in \mathbb{R}^{n \times m}$, $\bC\in \mathbb{R}^{p \times n}$, $\bD\in \mathbb{R}^{p \times m}$.
	
	\algorithmicensure~\ROM:   $\bA_r \in \mathbb{R}^{r \times r},  \bB_r\in \mathbb{R}^{r \times m}, \bC_r \in \mathbb{R}^{p \times r}$, and \,$\bD_r \in \mathbb{R}^{p \times m}$.
	
	\begin{algorithmic} [1]                                        
		\STATE  Compute the Lyapunov factors $\bU, \bL \in \mathbb{R}^{n\times n}$ for approximating the solutions of \eqref{lyapFacDes} and pick a 
		reduced dimension $1\leq r\leq \min(\mathsf{rank}(\bU),\mathsf{rank}(\bL))$.
		\STATE  Compute the SVD of the matrix $ \bL^T \bE \bU$, partitioned as follows 
		\begin{align*}
		\bL^T \bE \bU = \left[ \begin{matrix}
		\bZ_1 & \bZ_{2}
		\end{matrix}  \right] \left[ \begin{matrix}
		\bS_1 & \\[1ex] & \bS_{2}
		\end{matrix}  \right] \left[ \begin{matrix}
		\bY_1^T \\[1ex] \bY_{2}^T
		\end{matrix}  \right],
		\end{align*}
		where $\bS_1 \in \mathbb{R}^{r \times r} \ \mbox{and} \ \bS_{2} \in \mathbb{R}^{(n-r) \times (n-r)}$.
		
		\STATE 
		Construct the model reduction bases $\bW_r = \bL \bZ_1 	\bS_1^{-1/2}$ and $\bV_r = \bU \bY_1 	\bS_1^{-1/2}$.

		\STATE \label{alg:spaSR-Pstep} 
		Calculate $\re{A}_V =  \bA^{-1} (\bE \bV_r)$ and $\re{B}_A = \bA^{-1} \bB$. 
		Construct an intermediate \ROM approximating the reciprocal system:
\begin{align*}
\re{A}_r =  \bW_r^T \bE \re{A}_V,  \hspace{0.2cm} 
 \re{B}_r = \bW_r^T \bE  \re{B}_A,  
\re{C}_r = -\bC \re{A}_V,  \hspace{0.2cm}  \text{and} \hspace{0.2cm} 
\re{D}_r =\bD - \bC \re{B}_A .
\end{align*}		
\vspace{-1em}		
		\STATE \label{alg:spaSR-Trafstep}
		 Get the \ROM by using the reciprocal transformation onto the intermediate \ROM, i.e.,
\begin{align*}
\		{\bA}_r = \re{A}_r^{-1} ,  \hspace{0.2cm}
		{\bB}_r =   \re{A}_r^{-1} \re{B}_r  ,  \hspace{0.2cm}
		{\bC}_r = - \re{C}_r \re{A}_r^{-1} ,  \hspace{0.2cm}  \text{and} \hspace{0.2cm}
{\bD}_r = \re{D}_r - \re{C}_r \re{A}_r^{-1} \re{B}_r .
\end{align*}
	\end{algorithmic}
\end{algorithm}

\begin{remark}
Let us give two practical hints for ~\Cref{alg:spaSR}. Of course, the inverse of $\bA$ is not formed in step \ref{alg:spaSR-Pstep}, but a few linear equations are solved instead. This is done by following the steps indicated in \eqref{eq:spaPstep}.
Moreover, the last step in the algorithm is most efficiently implemented using a successive evaluation of the matrices using the following ordering. First ${\bA}_r = \re{A}_r^{-1}$ is evaluated, then ${\bB}_r =   {\bA}_r \re{B}_r$, followed by ${\bC}_r = - \re{C}_r {\bA}_r$. Finally, ${\bD}_r = \re{D}_r + {\bC}_r  \re{B}_r$ is determined.
\end{remark}

\subsection{Numerical study for low-rank implementation} \label{subsec:numLowRank}

It has been shown by several authors that the use of low-rank Lyapunov equation solvers saves orders of magnitude in a large-scale setting. This allows to solve with problem sizes that are computationally infeasible for direct (dense) solvers, based mainly on algorithms proposed by Bartels-Stewart and Hammarling, and recent adaptations (see \cite{sorensen2003direct} for an overview). These results directly transfer to \BT and, as we showcase in this section, also to our newly proposed \Cref{alg:spaSR}, which is the first low-rank implementation of \SPA (as far as the authors are aware of). We refer to the method as '\texttt{Alg.\,2,\,low-rank}' in the following and compare it to its dense counterpart '\texttt{Alg.\,2,\,dense}' as well as to the \texttt{Matlab} built-in method \texttt{balred} realizing \SPA.

\begin{figure}[h tb]
\text{$ $} \hspace{0.6cm} \underline{{\SPA and \BT} with $r = 12$} \hspace{2.7cm} \underline{{\SPA and \BT} with $r = 36$}\\
\includegraphics[scale = 0.38]{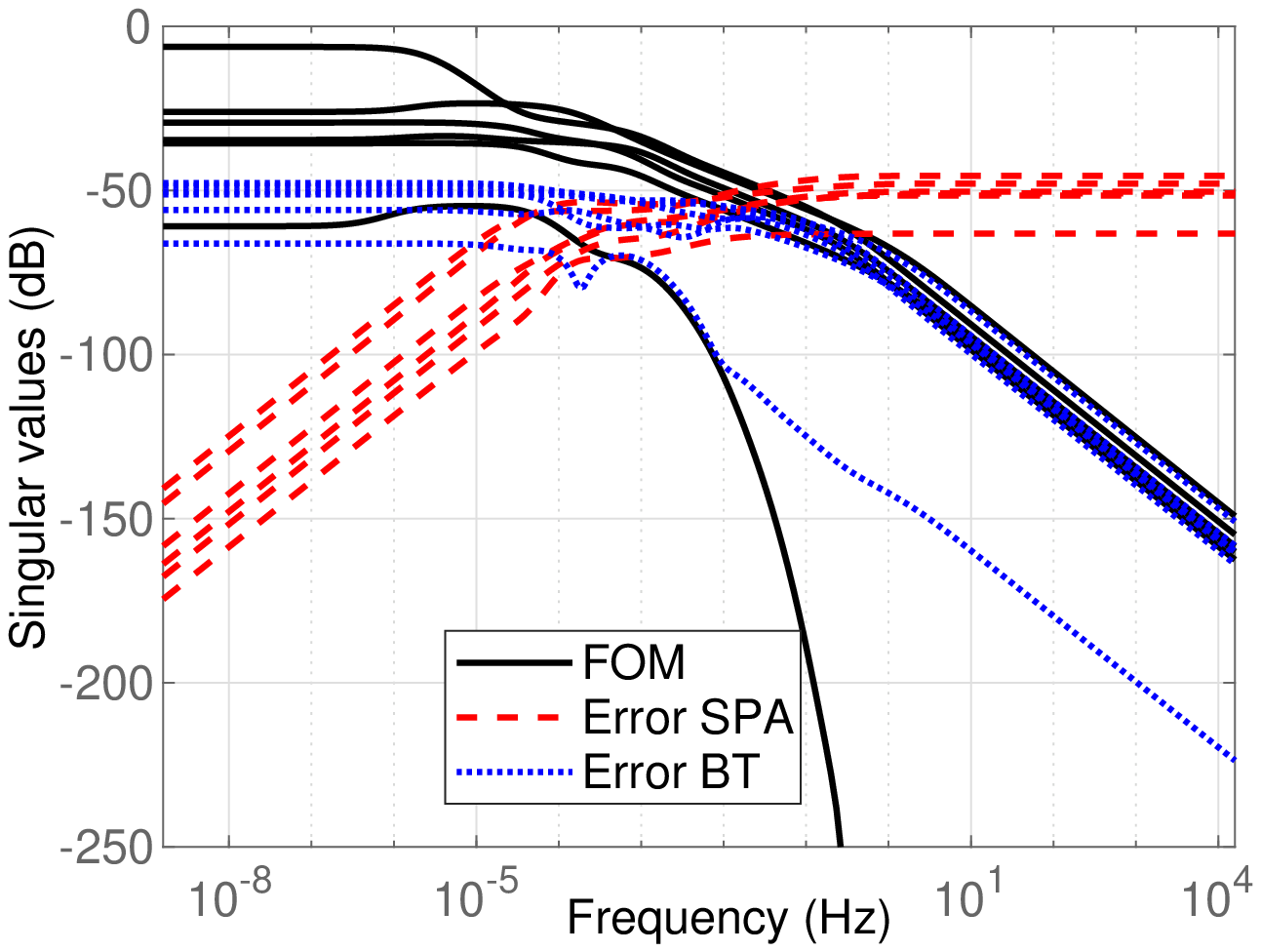} \hspace{1.5cm}
\includegraphics[scale = 0.38]{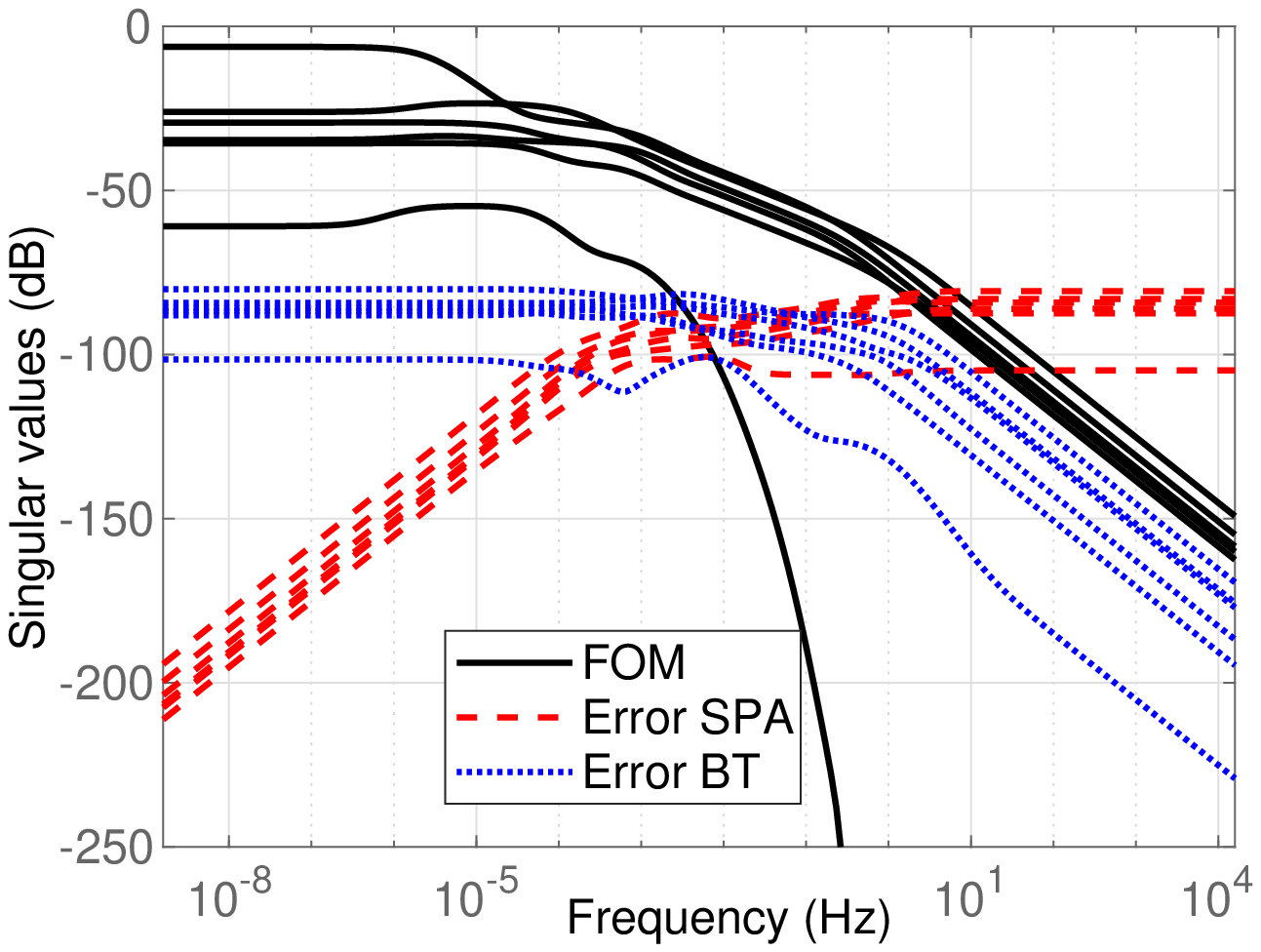}\\[0cm]
\vspace{-4mm}
\caption{\rail (with $n = 20\,209$). Frequency response of \FOM and the errors related \SPA and \BT; using low-rank operations.\label{fig:rail-freq}}                     \vspace{-3mm}                           
\end{figure}

\begin{SCfigure}
\raggedleft
  \caption{\rail with varying dimension $n$. CPU times for \SPA realized by Algorithm 2 (either using low-rank or dense operations) and \texttt{Matlab} built-in function \texttt{balred} ($r=12$ for all {\ROM}s).\label{fig:rail-times}}
\includegraphics[scale = 0.42]{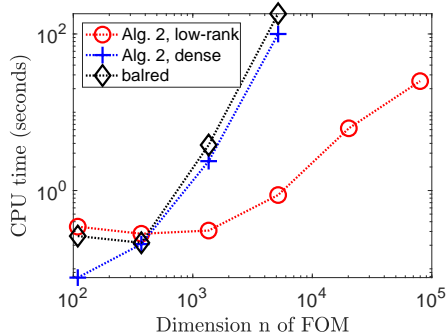}%
\vspace{-3mm}
\end{SCfigure}

The numerical studies are performed, from here on, for a benchmark problem described in \cite[Section 19]{book:dimred2003} and referred to as \rail, which is given by a semi-discretized heat transfer model of a steel rail. Depending on the underlying mesh resolution of the discretization, the \FOM has a dimension $n \in \{109$, $371$, $1\,357$, $5\,177$, $      20\,209$, $79\,841\}$. Both the benchmark data as well as the low-rank (ADI-based) Lyapunov equation solvers originate from the \texttt{MESS} toolbox \cite{SaaKB-mmess}. The numerical results have been generated using \texttt{MATLAB} Version 9.12 (R2022a) on an computer running with an Intel Core i7-8700 CPU with 32.0GB. To allow a better reproducibility of the reported results, the code and the benchmark data are provided in \cite{code:LiGoSPA}.

Notably, regardless of which of the three implementations was chosen for the realization of \SPA, almost no effect on the quality of the resulting \ROM was observed for the test cases with $n\leq 5\,177$. More precisely, the deviations with respect to the ${\cH_{\infty}}$ norm were below $10^{-10}$. However, larger dimensions could not be considered for the dense solvers, since they did not run to completion on the running machine.

The qualitative behavior of the \FOM and the approximation errors for \SPA and \BT are illustrated in the frequency domain in \Cref{fig:rail-freq} for two different choices of the \ROM order $r \in \{ 12,36\}$. As to be expected, \SPA performs better in the lower frequency range, while \BT is better in the higher frequency range. The maximum frequency-domain errors of the methods are fairly similar, ranging at about $-50 db$ for $r=12$ and  at $-75 db$ for $r =36$.

However, the computational times of the different \SPA implementations vary significantly as illustrated in \Cref{fig:rail-times}. While the two variants using dense solvers are almost the same in this respect, the low-rank implementation \texttt{Alg.\,2,\,low-rank} is already seven times faster for a \FOM dimension of $n = 1\,357$, i.e., $0.3$ seconds compared to $2.8$ seconds. The difference becomes even more pronounced for larger dimensions. The sparse solver is even faster for $n = 79\,841 $ than \texttt{balred} for $n= 5\,177$. 
(\texttt{Alg.\,2,\,low-rank} for $n = 79\,841 $: $25.0$~seconds versus \texttt{balred} for $n= 5\,177$: $181.2$~seconds). Larger dimensions are not feasible for \texttt{balred} due to the internal memory limits of the computing machine. It is to be noted that the very large memory demand of the dense solvers is a strong limitation in the large-scale setting.




\section{Data-driven implementation of balancing-related methods} \label{sec:QuadBT}

In \cite{gosea2022data}, it was shown that \BT can be realized (approximately) in a non-intrusive, data-driven manner. There, ''data'' are transfer function evaluations. This is fundamentally different to the other data-driven \BT approaches in \cite{moore1981principal,willcox2002balanced, Ro05,opmeer2011model}, which require snapshots of the full state. The main idea underlying the approach in \cite{gosea2022data} is to make use only implicitly of quadrature approximations of the two system Gramians for the construction of a low-order approximately balanced model.

The novel contribution of this part is to derive a similar data-driven adaptation of \SPA (or \QSPA). Since it follows similar principles as \cite{gosea2022data}, we set the stage in \Cref{subsec:ddBT} by going over the basic construction underlying the latter. In \Cref{sec:DataDrivenSPA} \QSPA is derived, and its numerical performance is examined in \Cref{sec:Numerics2}.

Since we are interested in realization-free approaches, we may assume $\bE = \bI$ from here on. This modification carries over to the \FOM, i.e., instead of \eqref{DesSys}, we have
\begin{align*}
\dot{\bx}(t) &= \bA\, \bx(t) + \bB\, \bu(t), \hspace{1.2cm}
\by(t) = \bC \bx(t) + \bD \bu(t).
\end{align*}

\subsection{Data-driven Balanced Truncation} \label{subsec:ddBT}

Following the \BT implementation  in \Cref{alg:origbt} (with $\bE = \bI$), square-root factors $\bU$, and $\bL$ of the Gramians are defined, and their product is approximated by a singular value decomposition, according to
\begin{align*}
	\bL^T\bU \approx \bZ_1 \bS_1 \bY_1^T, \hspace{1cm} \bS_1 \in \IR^{r,r}.
\end{align*}
By the definition of the reduction bases $\bW_r = \bL \bZ_1 	\bS_1^{-1/2}$ and $\bV_r = \bU \bY_1 	\bS_1^{-1/2}$ (Step~3 in the algorithm), the matrices of the \ROM realization are expressed as
\begin{align*} 
			\bA_r &= \bW_r^T \bA \bV_r=	\bS_1^{-1/2}\bZ_1^T\,(\bL^T\bA\bU)\,\bY_1\bS_1^{-1/2},  
			&		\bB_r = \bW_r^T \bB=\bS_1^{-1/2}\bZ_1^T\,(\bL^T\bB), \\[2mm]
			 \bC_r &= \bC \bV_r=(\bC\bU)\,\bY_1\bS_1^{-1/2}.
			 	\end{align*}
By the latter relations, we observe that the \ROM is fully characterized in terms of the following key quantities:
\begin{align}\label{eq:keyquant}
\bL^T \bU, \hspace{0.3cm} \bL^T\bA\bU,  \hspace{0.3cm} \bL^T\bB \hspace{0.3cm} \text{and} \hspace{0.3cm} \bC\bU.
\end{align}
These matrices can be approximated well from input-output data, as it is shown next. 

The starting point is a quadrature approximation of the Gramians in the frequency domain. Let $\imunit$ be the complex unit with $\imunit^2 = -1$. Then, the controllability Gramian $\bP$ and the observability Gramian $\bQ$. (i.e., the solutions to \eqref{lyapFacDes}) can be expressed as
\begin{align*} 
\begin{aligned} 
\bP &=
\frac{1}{2\pi}\int_{-\infty}^{\infty} (\imunit \zeta\bI -\bA)^{-1} \bB \bB^T (-\imunit \zeta\bI^T -\bA^T)^{-1} d \zeta, \\
\bQ &= \frac{1}{2\pi}\int_{-\infty}^{\infty} (-\imunit \omega\bI -\bA^T)^{-1} \bC \bC^T (\imunit \omega\bI -\bA)^{-1} d \omega.
\end{aligned}
\end{align*}
We consider first a numerical quadrature rule that approximates the frequency integral defining the controllability Gramian, producing an approximate Gramian $\quadP$:
\begin{align} \label{quad_P}
\bP \approx \quadP = \sum_{k=1}^{\np} \rho_k^2  (\imunit \omega_k \bI -\bA)^{-1} \bB \bB^T (-\imunit \omega_k \bI^T -\bA^T)^{-1}
\end{align}
Hereby, $\omega_k$ and $\rho_k^2$ denote the quadrature nodes and quadrature weights, respectively. Thus, $\rho_k$ are, by construction, the square-roots of the quadrature weights.
Evidently, we may decompose the quadrature-based Gramian approximation  as  $\quadP = \quadU \quadU^*$ with a square-root factor $\quadU \in \mathbb{C}^{n \times \np m}$ defined as 
\begin{equation} \label{quad_U}
\quadU = \left[ \,
\rho_1  (\imunit \omega_1 \bI -\bA)^{-1} \bB \quad 
\cdots \quad \rho_\np  (\imunit \omega_\np \bI-\bA)^{-1} \bB  \right].
\end{equation}
Note that both $\bP$ and its quadrature approximation, $\quadP$, are real-valued matrices, yet the explicit square-root factor, $\quadU$, is overtly complex and subsequent computation engages complex floating point arithmetic. 

Similarly, a quadrature approximation of $\bQ$ is defined by
\begin{align*}
\bQ \approx \quadQ = \sum_{j=1}^{\np} \qwe_j^2  (-\imunit \qpo_j \bI^T -\bA^T)^{-1} \bC^T \bC (\imunit \qpo_j \bI -\bA)^{-1},
\end{align*}
with quadrature points $\qpo_j$ and quadrature weights $\qwe_j^2$. (For convenience we assume the number of quadrature points to be the same for $\bQ$ and $\bP$.)
The corresponding square-root factor decomposition reads
$\quadQ = \quadL \quadL^*$ where
\begin{equation}  \label{quad_L}
\quadL^* = \left[ \begin{matrix}
\qwe_1 \bC (\imunit \qpo_1\bI -\bA)^{-1} \\
\vdots \\ \qwe_\np \bC (\imunit \qpo_\np \bI -\bA)^{-1}
\end{matrix}  \right] \in \mathbb{C}^{\np p \times n}.
\end{equation}

For a concise notation in the multi-input multi-output setting, the following two technical definitions are useful.

\begin{definition}\label{def:blockMatkj}
	Let $\bX \in \IC^{\np p \times \np m }$. Then, for $1 \leq k,j \leq \np$, we say that the $(k,j)$th block $(p,m)$ entry of matrix $\bX$ is a matrix in $\IC^{p \times m}$, denoted with $\bX_{k,j}$, for which its $p$ rows are a subset of the rows of matrix $\bX$, indexed from $ (k-1)p+1$ to $k p$. Additionally, the $m$ columns of the matrix $\bX_{k,j}$ are a subset of the columns of the matrix $\bX$, indexed from $ (k-1)m+1$ to $k m$.
\end{definition}
It is to be noted that in \Cref{def:blockMatkj}, if $m = p =1$ (corresponding to the SISO case), then $\bX_{k,j}$ is nothing else but the $(k,j)$th scalar (or $(1,1)$) entry of matrix $\bX \in \IC^{\np \times \np}$.
\begin{definition}\label{def:blockVeckj}
	Let $\bX \in \IC^{\np p \times  m }$. Then, for $1 \leq k \leq \np$, we say that the $k$th block $(p,m)$ entry of matrix $\bX$ is a matrix in $\IC^{p \times m}$, denoted with $\bX_{k}$, for which its $p$ rows are a subset of the rows of matrix $\bX$, indexed from $ (k-1)p+1$ to $k p$. Additionally, the $m$ columns of matrix $\bx_{k}$ are exactly the same as those of matrix $\bX$. (An equivalent definition can be formulated for $\bX \in \IC^{p \times  \np m }$; however, for brevity sake, we skip the details here.)
\end{definition}

\begin{algorithm}[htp] 
\footnotesize 
	\caption{Quadrature-based Balanced Truncation (\QBT)}  
	\label{alg:quadbt}                                     
	\algorithmicrequire~-{Transfer} function of \FOM: $\TF(s)$;\\
	\hspace*{5mm} -Two sets of quadrature rules for approximating an integral of the form $1/(2\pi) \int_{-\infty}^{\infty} f(s) ds$:\\
	\hspace*{7mm} given by $\omega_k$, $\rho_k$, respectively $\qpo_j$, $\qwe_j$ for for $k,j=1,2,\ldots,\np$ (assuming $\omega_k\neq \qpo_j$)\\
	\hspace*{5mm} reduced dimension $r$, $1\leq r\leq \np \min(m,p)$.
	
	\algorithmicensure~{\ROM}:  $\qdr{\bA}_r\in\mathbb{R}^{r \times r}, \ \qdr{\bB}_r \in \mathbb{R}^{r \times m}, \qdr{\bC}_r \in \mathbb{R}^{p \times r}$ and \,$\qdr{\bD}_r \in \IR^{p \times m }.$
	\begin{algorithmic} [1]   
		\STATE Determine feedtrough term $\bD = \lim_{s\to \infty} \TF(s)$.
		\STATE \label{sample} Evaluate the strictly proper transfer function $\spTF:s \mapsto \TF(s) - \bD$ at $\{\spTF(\imunit \omega_j)\}_{j=1}^{\np}$ and $\{\spTF(\imunit \qpo_k)\}_{k=1}^{\np}$.  Using the samples and the quadrature weights, 
		construct $\boldsymbol{\quadLL},\boldsymbol{\quadMM}$, 
		$\quadLb$, and $\quadcU$  as in~\Cref{prop:quadMatrices}.
		\STATE Compute the SVD of matrix $\quadLL \in \IC^{\np p \times \np m}$:
		\begin{align*}
		\quadLL  = \left[ \begin{matrix}
		\quadZ_1 & \quadZ_{2}
		\end{matrix}  \right] \left[ \begin{matrix}
		\quadS_1 & \\ & \quadS_{2}
		\end{matrix}  \right] \left[ \begin{matrix}
		\quadY_1^* \\ \quadY_{2}^*
		\end{matrix}  \right],
		\end{align*}
		where $\quadS_1 \in \mathbb{R}^{r \times r}$ and $\quadS_{2} \in \mathbb{R}^{(\np p-r) \times (\np m-r)}$.
		\STATE Construct the reduced order matrices: 
\begin{align*}
\qdr{\bA}_r = \quadS_1^{-1/2} \quadZ_1^*  \quadMM \ \quadY_1   \quadS_1^{-1/2},  \hspace{0.2cm} 
\qdr{\bB}_r =  \quadS_1^{-1/2} \quadZ_1^* \quadLb,
\qdr{\bC}_r =  \quadcU^T \quadY_1   \quadS_1^{-1/2},  \hspace{0.2cm}  \text{and} \hspace{0.2cm} 
\qdr{\bD}_r = \bD
\end{align*}			
	\end{algorithmic}
\end{algorithm}

By the latter considerations and noticing that $\TF(\infty)= \bD$, it can be seen that the key quantities in \eqref{eq:keyquant} can be approximated by transfer function evaluations. 
\begin{proposition} \label{prop:quadMatrices}
Let $\quadU$ and $\quadL$ be as defined in \eqref{quad_U} and \eqref{quad_L}, whereby the quadrature points $\omega_k \neq \qpo_j$ are distinct for $1\leq k,j \leq \np$.  Also, let
\begin{align*}
 	\spTF(s) = \bC (s  \bI - \bA)^{-1} \bB = \TF(s) - \lim_{\tilde{s} \to \infty} \TF(\tilde{s}).
\end{align*}
	Define the matrices $\quadLL =\quadL^*  \quadU \in \IC^{\np p \times \np m}$  and 
	 $\quadMM =\quadL^* \bA \quadU \in \IC^{\np p \times \np m}$.\\
	 Then, for $1\leq k,j\leq\np$, the $(k,j)$th block $(p,m)$ entries of matrices $\quadLL$ and $\quadMM$, respectively read as (following \Cref{def:blockMatkj}):
\begin{align*} 
	\quadLL_{k,j} &= - \rho_k \qwe_j
	\displaystyle \frac{\spTF(\imunit\omega_k) - \spTF(\imunit\omega_j)}{\imunit\omega_k - \imunit \omega_j},
	\\
	\quadMM_{k,j} &= 
	- \rho_k \qwe_j\displaystyle \frac{\imunit\omega_k \spTF(\imunit\omega_k) - \imunit \qpo_j\spTF(\imunit\omega_j)}{\imunit\omega_k - \imunit \omega_j}.
	\end{align*}
	Likewise, defining $\quadLb = \quadL^*\bB
	\in \IC^{\np p \times m}$ and $\quadcU^T = \bC^T \quadU\in \IC^{p \times \np m}$, and by following \Cref{def:blockVeckj}, we find 
	\begin{align*} 
\quadLb_k & = \rho_k \spTF(\imunit \omega_k),  \hspace{0.3cm} \text{and} \hspace{0.3cm} 
\quadcU_j =    \qwe_j\spTF(\imunit \omega_j), \hspace{0.3cm} \text{for} \ 1 \leq k,j \leq \np.
	\end{align*}
\end{proposition}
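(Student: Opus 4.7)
The plan is to verify each of the four claimed identities by a direct computation starting from the explicit block forms of $\quadU$ and $\quadL^*$ given in \eqref{quad_U} and \eqref{quad_L}. The workhorse will be the resolvent identity
\begin{align*}
(\imunit \omega_k \bI - \bA)^{-1}(\imunit \omega_j \bI - \bA)^{-1} = \frac{(\imunit \omega_j \bI - \bA)^{-1} - (\imunit \omega_k \bI - \bA)^{-1}}{\imunit \omega_k - \imunit \omega_j},
\end{align*}
valid whenever the two shifts are distinct, which is guaranteed by the assumption $\omega_k \neq \qpo_j$. Sandwiching such an identity between $\bC$ on the left and $\bB$ on the right turns each resolvent into an evaluation of $\spTF$, which is precisely the mechanism that produces Loewner-type divided differences.

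First I would dispose of the easy identities for $\quadLb$ and $\quadcU^T$. Reading off the $k$-th block row of $\quadL^*$ and multiplying by $\bB$ on the right, one gets $\quadLb_k = \qwe_k \bC(\imunit \qpo_k \bI - \bA)^{-1} \bB$, which by definition of $\spTF$ equals (up to the weight) $\spTF$ evaluated at the corresponding quadrature node; the computation for $\quadcU^T$ is symmetric, with $\bC$ multiplying the $j$-th block column of $\quadU$.

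Second, for the Loewner-type matrix $\quadLL = \quadL^* \quadU$, the $(k,j)$-th block $(p,m)$ entry in the sense of \Cref{def:blockMatkj} is
\begin{align*}
\qwe_k \rho_j \, \bC (\imunit \qpo_k \bI - \bA)^{-1}(\imunit \omega_j \bI - \bA)^{-1} \bB,
\end{align*}
and the resolvent identity immediately rewrites this as a divided difference of $\spTF(\imunit \qpo_k)$ and $\spTF(\imunit \omega_j)$ divided by $\imunit \qpo_k - \imunit \omega_j$, matching the stated formula (modulo the trivial renaming of indices used in the statement).

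Third, for the shifted Loewner-type matrix $\quadMM = \quadL^* \bA \quadU$, the additional trick is to absorb the middle $\bA$ by writing $\bA = \imunit \omega_j \bI - (\imunit \omega_j \bI - \bA)$, so that
\begin{align*}
\bA (\imunit \omega_j \bI - \bA)^{-1} = \imunit \omega_j (\imunit \omega_j \bI - \bA)^{-1} - \bI.
\end{align*}
Substituting this and re-applying the resolvent identity to the remaining resolvent product, the $\bI$-contribution cancels cleanly with one of the terms and the surviving expression combines into $\big(\imunit \omega_j \spTF(\imunit \omega_j) - \imunit \qpo_k \spTF(\imunit \qpo_k)\big)/(\imunit \qpo_k - \imunit \omega_j)$, which is exactly the claimed shifted-Loewner kernel. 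The only real obstacle is bookkeeping: keeping track of which quadrature node belongs to the row index (from $\quadL^*$) and which to the column index (from $\quadU$), together with the weights $\rho_j, \qwe_k$ and the overall sign — there is no conceptual difficulty, since the argument is exactly the derivation that underlies the Loewner framework.
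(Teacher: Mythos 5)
Your proposal is correct and follows essentially the same route as the paper's proof in Appendix~A: the resolvent identity yields the divided-difference form of $\quadLL_{k,j}$, and the middle $\bA$ in $\quadMM_{k,j}$ is absorbed algebraically to produce the shifted-Loewner kernel (the paper splits $(\imunit\omega_k-\imunit\qpo_j)\bA$ as a difference of the two shifted resolvend arguments, while you write $\bA = \imunit\omega_j\bI - (\imunit\omega_j\bI-\bA)$ and reapply the resolvent identity --- an equivalent computation). Your remark about tracking which weight/node pair $(\rho,\omega)$ versus $(\qwe,\qpo)$ attaches to the row and column indices is well taken, as the paper's own statement and appendix are not fully consistent on this point.
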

More details and various extensions (using time-domain data, for discrete-time systems, etc.) can be found in \cite{gosea2022data}. Based on \Cref{prop:quadMatrices}, the data-driven adaptation of \BT can be implemented, as it is shown in \Cref{alg:quadbt}. A proof of the proposition can be found  in \Cref{app:proofQBT}; this is indeed useful for setting up the stage for the  derivations of the novel \QSPA presented in the next section. 




\subsection{Data-driven Singular Perturbation Approximation} \label{sec:DataDrivenSPA}

The formulation of the novel data-driven \QSPA implementation is presented here. Similarly as for the low-rank implementation of \SPA (\Cref{sec:LowRankSPA}), we rely on a construction based on the alternative characterization of \Cref{lem:SPAbyBT} and illustrated in \Cref{fig:comDiagramSPA}.
First, an intermediate \ROM $(\rer{A},\rer{B}, \rer{C}, \rer{D})$ approximating the reciprocal counterpart of the \FOM is constructed. Then, by applying the reciprocal transformation of this low-dimensional intermediate \ROM, we obtain the \QSPA reduced system approximating the \FOM. 

However, the intermediate \ROM is constructed from data here, following a similar quadrature approximation as in the data-driven \BT, cf.~\Cref{subsec:ddBT}.
This approximation requires a careful choice of quadrature rule.  To derive an appropriate one, we rewrite the frequency representations of the reciprocal Gramians $\re{P}$ and $\re{Q}$ in terms of evaluations of the original transfer function in \eqref{eq:TF_def}.
Let
\begin{align*}
 \Kre: \IC \to \IC^{n,n}, \qquad \Kre(s) = (s \bI - \re{A})^{-1},
\end{align*}
so that the strictly proper part of the reciprocal transfer function $s \mapsto \TFre(s) - \TFre(\infty)$ can be equivalently expressed as $s \mapsto\re{C} \Kre(s) \re{B}$.
Similarly to the derivation of the data-driven \BT, we consider the frequency representations of the Gramians. 
The reciprocal controllability Gramian reads
\begin{align*}
\begin{split}
	\re{P} &=
	\frac{1}{2\pi}\int_{-\infty}^{\infty} \Kre(\imunit \zeta) \re{B} \re{B}^T \Kre(-\imunit \zeta)^{T}  d \zeta= \frac{1}{2\pi}\int_{-\infty}^{\infty} \Kre \left( \frac{1}{\imunit \zeta} \right) \re{B} \re{B}^T \Kre
	\left( \frac{1}{-\imunit \zeta} \right)^{T} \frac{1}{\zeta^2}  d \zeta,
\end{split}
\end{align*}
whereby the last equality follows by using integration with the substitution  $s \to -1/s$ on 
$
\FF(\zeta):= \frac{1}{2\pi} \Kre(\imunit \zeta) \re{B} \re{B}^T \Kre(-\imunit \zeta)^{T}
$,
according to
\begin{align*}
	\int_{-\infty}^{\infty} \FF(\zeta) d\zeta &= \int_{-\infty}^{0} \FF(\zeta) d\zeta  + \int_{0}^{\infty} \FF(\zeta) d\zeta \\
	& =   \int_{0}^{\infty} \FF\left( -\frac{1}{\zeta} \right) \frac{1}{\zeta^2}  d\zeta  + \int_{-\infty}^{0} \FF\left( -\frac{1}{\zeta} \right) \frac{1}{\zeta^2}  d \zeta =
\int_{-\infty}^{\infty} \FF\left( -\frac{1}{\zeta} \right) \frac{1}{\zeta^2}  d\zeta.
\end{align*}
Based on the reformulation, we consider the following approximation by quadrature,
\begin{align*}
\begin{split}
	\re{P} &\approx \qdr{\re{P}} = \sum_{k=1}^{\np} \left(\frac{\rho_k}{\omega_k}\right)^2  \Kre \left( \frac{1}{\imunit \omega_k} \right) \re{B} \re{B}^T \Kre
	\left( \frac{1}{-\imunit \omega_k} \right)^{T},
\end{split}
\end{align*}
with quadrature nodes $\omega_k$ and weights $\rho_k^2$. This is in analogy to the approximation provided by \eqref{quad_P}. We define a square-root factor $\requadU$, which fulfills $\requadP = \requadU \requadU^*$, as
\begin{equation} \label{eq:SpaQuad_U}
\requadU = \left[ \,
\frac{\rho_1}{\omega_1} \Kre \left( \frac{1}{\imunit \omega_1} \right) \re{B} \quad 
\cdots \quad \frac{\rho_\np}{\omega_\np} \Kre\left( \frac{1}{\imunit \omega_\np} \right)\re{B}  \right] \in \mathbb{C}^{n \times \np m}.
\end{equation}
Similarly, the reciprocal observability Gramian is approximated by quadrature
\begin{align*} 
\begin{split}
\re{Q} \approx \requadQ = \sum_{k=1}^{\np} \left(\frac{\qwe_j}{\qpo_j}\right)^2  \Kre \left( \frac{1}{-\imunit \qpo_j} \right)^T \re{C}^T \re{C} \Kre
	\left( \frac{1}{\imunit \qpo_j} \right),
\end{split}
\end{align*}
with quadrature points and weights $\qpo_j$  and $\qwe_j^2$, respectively. The corresponding square-root factor decomposition reads
$\requadQ = \requadL \requadL^*$, where
\begin{equation}  \label{eq:SpaQuad_L}
\requadL^* = \left[ \begin{matrix}
\frac{\qwe_1}{\qpo_1} \re{C} \Kre \left( \frac{1}{\imunit \qpo_1} \right)\\
\vdots \\ 
\frac{\qwe_\np}{\qpo_\np} \re{C} \Kre \left( \frac{1}{\imunit \qpo_\np} \right) \\
\end{matrix}  \right] \in \mathbb{C}^{\np p \times n}.
\end{equation}

Certain key quantities of the reciprocal system can be directly approximated from data, as the following result shows. The construction is similar to the one used in \Cref{prop:quadMatrices}.

\begin{proposition} \label{prop:quadSPA}
	Let $\requadU$ and $\requadL$ be as defined in 
	\eqref{eq:SpaQuad_U} and \eqref{eq:SpaQuad_L}, whereby the quadrature-points $\omega_k \neq \qpo_j$ are distinct and nonzero for $1\leq k,j \leq \np$.
	Also, let
	\begin{align*}
	\TFzer: \IC \to \IC^{p,m}, \qquad  \TFzer({s}) =  \TF(s) - \TF(0).
	\end{align*}
	Define the matrices $\requadLL =\requadL^*  \requadU \in \IC^{\np p \times \np m}$  and 
	 $\requadMM = \requadL^* \re{A} \requadU \in \IC^{\np p \times \np m}$. Then, for $1\leq k,j\leq\np$, the $(k,j)$th block $(p,m)$ entries of matrices $\requadLL$ and $\requadMM$, respectively read (following \Cref{def:blockMatkj}) as
\begin{align*} 
	\requadLL_{k,j} &= 
 - \rho_k \qwe_j 
	\displaystyle \frac{\TFzer(\imunit\omega_k) - \TFzer(\imunit\qpo_j)}{\imunit\omega_k -  \imunit \qpo_j},
	\\
	\requadMM_{k,j} &= 
	- \rho_k \qwe_j \displaystyle \frac{(\imunit\omega_k)^{-1} \TFzer(\imunit\omega_k) - (\imunit\qpo_j)^{-1} \TFzer(\imunit\qpo_j)}{\imunit\omega_k -  \imunit \qpo_j}.
	\end{align*}
	Likewise, defining $\requadLb = \requadL^*\re{B}
	\in \IC^{\np p \times m}$ and $\requadcU^T = \re{C} \requadU\in \IC^{p \times \np m}$
 we find
	\begin{align*} 
	\requadLb_k & =  \frac{\rho_k}{\omega_k} \TFzer(\imunit \omega_k), \hspace{0.3cm} \text{and} \hspace{0.3cm}
	\requadcU_j =    \frac{\qwe_j}{\qpo_j} \TFzer(\imunit \qpo_j), \hspace{0.3cm} \text{for} \ 1 \leq k,j \leq \np.
	\end{align*}
\end{proposition}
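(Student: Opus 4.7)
The plan is to reduce this proposition to an application of \Cref{prop:quadMatrices} (the data-driven \BT result) by expressing the reciprocal-system quadrature factors in terms of resolvents of the original system matrix $\bA$. Exploiting $\bE = \bI$, the reciprocal matrices take the simple form $\re{A} = \bA^{-1}$, $\re{B} = \bA^{-1}\bB$, $\re{C} = -\bC\bA^{-1}$, and the core algebraic identities to establish are
\begin{align*}
\Kre(1/s)\,\re{B} = -s(s\bI - \bA)^{-1}\bB, \qquad \re{C}\,\Kre(1/s) = s\,\bC(s\bI - \bA)^{-1},
\end{align*}
valid whenever $s$ lies in the resolvent set of $\bA$. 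Both follow from the direct computation $\Kre(1/s) = (s^{-1}\bI - \bA^{-1})^{-1} = s\bA(\bA - s\bI)^{-1}$ combined with the fact that $\bA^{\pm 1}$ commutes with $(\bA - s\bI)^{-1}$. Substituting $s = \imunit\omega_k$ and $s = \imunit\qpo_j$ into \eqref{eq:SpaQuad_U}--\eqref{eq:SpaQuad_L}, and absorbing the resulting prefactors into the scalings $\rho_k/\omega_k$, $\qwe_j/\qpo_j$, yields
\begin{align*}
\requadU_k = -\imunit\rho_k(\imunit\omega_k\bI - \bA)^{-1}\bB, \qquad \requadL_j^* = \imunit\qwe_j\bC(\imunit\qpo_j\bI - \bA)^{-1},
\end{align*}
so that $\requadU = -\imunit\quadU$ and $\requadL^* = \imunit\quadL^*$, where $\quadU$ and $\quadL^*$ are the standard \QBT factors of the original system at the same quadrature nodes.

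From this identification, the claim for $\requadLL = \requadL^*\requadU$ is immediate: the two phases cancel as $(\imunit)(-\imunit) = 1$, giving $\requadLL = \quadL^*\quadU = \quadLL$, and \Cref{prop:quadMatrices} delivers the stated formula modulo interchanging $\spTF = \TF - \TF(\infty)$ with $\TFzer = \TF - \TF(0)$. This replacement is legitimate because the difference $\TFzer - \spTF = \bC\bA^{-1}\bB$ is a constant that cancels in the Loewner-type numerator. For the one-sided quantities $\requadLb$ and $\requadcU$, the partial-fraction identity
\begin{align*}
\bA^{-1}(s\bI - \bA)^{-1} = s^{-1}\bigl[\bA^{-1} + (s\bI - \bA)^{-1}\bigr]
\end{align*}
produces $\bC(s\bI - \bA)^{-1}\bA^{-1}\bB = s^{-1}[\bC\bA^{-1}\bB + \spTF(s)] = s^{-1}\TFzer(s)$, and combining with the surviving $\pm\imunit$ phase and the quadrature scaling yields the claimed formulas directly.

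The remaining case $\requadMM = \requadL^*\re{A}\requadU = \quadL^*\bA^{-1}\quadU$ is algebraically the most involved. The interior $\bA^{-1}$ is again absorbed via the same partial-fraction identity, leaving two terms: one of the form $\bC(\imunit\qpo_k\bI - \bA)^{-1}\bA^{-1}\bB$, handled as above, and one of the form $\bC(\imunit\qpo_k\bI - \bA)^{-1}(\imunit\omega_j\bI - \bA)^{-1}\bB$, which collapses via the standard resolvent identity to $[\spTF(\imunit\qpo_k) - \spTF(\imunit\omega_j)]/[\imunit\omega_j - \imunit\qpo_k]$. Combining these pieces over a common denominator and re-using $\spTF = \TFzer - \bC\bA^{-1}\bB$ produces exactly the claimed formula; the $(\imunit\omega_k)^{-1}$ and $(\imunit\qpo_j)^{-1}$ weights inside the numerator arise precisely from the $s^{-1}$ factor in the partial-fraction identity. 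The only genuine obstacle is bookkeeping---tracking the $\pm\imunit$ phases so that all imaginary units collapse correctly, and freely interchanging the normalizations $\spTF$ and $\TFzer$ through their constant offset $\bC\bA^{-1}\bB$---but no new tool beyond the resolvent calculus already used in the proof of \Cref{prop:quadMatrices} is required.
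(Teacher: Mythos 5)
Your proof is correct, but it takes a genuinely different route from the paper's. The paper works entirely inside the reciprocal system: it sets $\nu=(\imunit\omega_k)^{-1}$, $\mu=(\imunit\qpo_j)^{-1}$, applies the two resolvent identities
$\Kre(\nu)\Kre(\mu)=-\tfrac{1}{\nu-\mu}(\Kre(\nu)-\Kre(\mu))$ and
$\Kre(\nu)\re{A}\Kre(\mu)=-\tfrac{1}{\nu-\mu}(\nu\Kre(\nu)-\mu\Kre(\mu))$
(the same two used for \Cref{prop:quadMatrices}, just at the inverted nodes), uses $\re{C}\Kre(\nu)\re{B}=\TFzer(1/\nu)$, and finishes with the scalar conversion $\omega_k\qpo_j(\nu-\mu)=\imunit\omega_k-\imunit\qpo_j$. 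You instead pull everything back to resolvents of the original $\bA$: the identities $\Kre(1/s)\re{B}=-s(s\bI-\bA)^{-1}\bB$ and $\re{C}\Kre(1/s)=s\bC(s\bI-\bA)^{-1}$ show that $\requadU=-\imunit\,\widetilde{\bU}$ and $\requadL^{*}=\imunit\,\widetilde{\bL}^{*}$, so $\requadLL=\widetilde{\bL}^{*}\widetilde{\bU}$ follows from \Cref{prop:quadMatrices} (the constant offset $\TFzer-\spTF=\bC\bA^{-1}\bB$ cancelling in the divided difference), while $\requadMM=\widetilde{\bL}^{*}\bA^{-1}\widetilde{\bU}$ is handled by the partial-fraction identity $(s\bI-\bA)^{-1}\bA^{-1}=s^{-1}[\bA^{-1}+(s\bI-\bA)^{-1}]$ plus the standard resolvent identity; I checked that the resulting two-term combination does collapse to the claimed quotient. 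Your route is somewhat longer for $\requadMM$ but buys a structural insight the paper only asserts by inspection in its Section~5.3, namely that the \QSPA and \QBT Loewner-type matrices $\requadLL$ and $\quadLL$ coincide exactly; the paper's route is shorter, symmetric, and reuses the proof template of \Cref{prop:quadMatrices} verbatim. One cosmetic point: the index/weight pairing in your final formulas inherits the same swap between $(\rho_k,\omega_k)$ and $(\qwe_j,\qpo_j)$ that is already present in the statement and proof of \Cref{prop:quadMatrices}, so this is a notational issue of the paper rather than a gap in your argument.
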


\begin{proof}
\def\VarA{{\nu}}
\def\VarB{{\mu}}
Only for this proof, we introduce the abbreviations $\VarA = (\imunit\omega_k)^{-1}$ and $\VarB =  (\imunit\qpo_j)^{-1}$. 
A small calculation shows that
\begin{align*}
	\Kre(\VarA) \Kre(\VarB) = \frac{1}{\VarA - \VarB} \Kre(\VarA) \left[ (\VarA \bI - \re{A}) -  (\VarB \bI - \re{A})  \right] \Kre(\VarB) =  - \frac{1}{\VarA - \VarB} \left( \Kre(\VarA)-\Kre(\VarB) \right).
\end{align*}
By the latter, and $ \re{C} \Kre(\VarA ) \re{B}  = \TFzer({1/\VarA}) =  \TFzer(\imunit\omega_k) $, and $ \re{C} \Kre(\VarB ) \re{B}  =\TFzer(\imunit\qpo_j) $, it follows
\begin{align*}
\requadLL_{k,j} &=  (\rho_k/\omega_k)  (\qwe_j/\qpo_j) \, \re{C} \Kre(\VarA) \Kre(\VarB) \re{B} =   - \rho_k \qwe_j \frac{1 }{\omega_k \qpo_j(\VarA - \VarB)}  \re{C} \left[ \Kre(\VarA)-\Kre(\VarB) \right] \re{B} \\
&= - \rho_k \qwe_j \frac{\TFzer(\imunit\omega_k) - \TFzer(\imunit\qpo_j)}{\imunit\omega_k- \imunit \qpo_j},
\end{align*}
whereby $\omega_k \qpo_j (\VarA-\VarB) = \imunit\omega_k-\imunit\qpo_j$ was used in the last step. Moreover using the relation
\begin{align*}
	\Kre(\VarA) \re{A} \Kre(\VarB) = -\frac{1}{\VarA - \VarB}  \left[\VarA  \Kre(\VarA)- \VarB \Kre(\VarB) \right],
\end{align*}
we can show in a similar manner that
\begin{align*} 
\requadMM_{k,j}  &= (\rho_k/\omega_k) (\qwe_j/\qpo_j) \, \re{C} \left(\Kre(\VarA) \re{A} \Kre(\VarB)  \right) \re{B} = -\rho_k \qwe_j \frac{1 }{\omega_k \qpo_j(\VarA - \VarB)}  \re{C} \left(\VarA  \Kre(\VarA)- \VarB \Kre(\VarB) \right) \re{B} \nonumber \\
&= - \rho_k \qwe_j \displaystyle \frac{(\imunit\omega_k)^{-1} \TFzer(\imunit\omega_k) - (\imunit\qpo_j)^{-1} \TFzer(\imunit\qpo_j)}{\imunit\omega_k -  \imunit \qpo_j}.
\end{align*}
The explicit representations of $\requadLb $ and $\requadcU$ (as data matrices) are easily derived, since
\begin{align*}
	\requadLb_k &= (\rho_k/\omega_k)   \, \re{C} \Kre(\imunit \omega_k) \re{B} = \frac{\rho_k}{\omega_k} \TFzer(\imunit \omega_k),\ \ \
	\requadcU_j =  (\qwe_j/\qpo_j) \, \re{C}  \Kre(\imunit \qpo_j) \re{B} = \frac{\qwe_j}{\qpo_j} \TFzer(\imunit \qpo_j).
\end{align*}
\end{proof}

The main contribution of this section, i.e., the data-driven, realization-free implementation of \SPA is summarized in \Cref{alg:quadspq_new}.

\begin{remark}
A generalized version of \Cref{prop:quadSPA} omitting the assumption $\omega_k \neq \qpo_j$ for all $j,k$ can be derived, but it requires evaluations of the derivative of the transfer function, see~\Cref{app:QuadBalancing} for details.
\end{remark}
{
\begin{algorithm}[htp] 
\footnotesize 
	\caption{Quadrature-based Singular Perturbation Approximation (\QSPA)}  
	\label{alg:quadspq_new}                                     
	\algorithmicrequire~-{Transfer} function of \FOM: $\TF(s)$;\\
	\hspace*{5mm} -Two sets of quadrature rules for approximating an integral of the form $1/(2\pi) \int_{-\infty}^{\infty} f(s) ds$:\\
	\hspace*{7mm} given by $\omega_k$, $\rho_k$, respectively $\qpo_j$, $\qwe_j$ for for $k,j=1,2,\ldots,\np$ (assuming $\omega_k\neq \qpo_j$)\\	
	\hspace*{5mm} -reduced dimension $r$, $1\leq r\leq \min(\np p, \np m)$.
	
	\algorithmicensure~{\ROM}:  $\qdr{\bA}_r\in\mathbb{R}^{r \times r}, \, \qdr{\bB}_r \in \IR^{r \times m}, \, \qdr{\bC}_r \in \IR^{p \times r}$, and \,$\qdr{\bD}_r \in \IR^{p \times m}$
	\begin{algorithmic} [1]   
		\STATE Determine the moment $\bD_0 =  \TF(0)$.		
\STATE \label{sample2} Evaluate the modified transfer function $\TFzer:s \mapsto \TF(s) - \bD_0$ at $\{\TFzer(\imunit \omega_k)\}_{k=1}^{\np}$ and $\{\TFzer(\imunit \qpo_j)\}_{j=1}^{\np}$. Using the samples and the quadrature weights, construct the data matrices $\requadLL,\requadMM$, 
$\requadLb$, and $\requadcU$  as in~\Cref{prop:quadSPA}.
\STATE Compute the SVD of matrix $\requadLL \in \IC^{\np p \times \np m}$:
\begin{equation*} 
\requadLL  = \left[ \begin{matrix}
\quadZ_1 & \quadZ_{2}
\end{matrix}  \right] \left[ \begin{matrix}
\quadS_1 & \\ & \quadS_{2}
\end{matrix}  \right] \left[ \begin{matrix}
\quadY_1^* \\ \quadY_{2}^*
\end{matrix}  \right],
\end{equation*}
where $\quadS_1 \in \mathbb{R}^{r \times r}$ and $\quadS_{2} \in \mathbb{R}^{(\np p-r) \times (\np m-r)}$.
\STATE Construct the intermediate reduced order matrices: 
\begin{align*}
\rer{A} = \quadS_1^{-1/2} \quadZ_1^*  \requadMM \ \quadY_1   \quadS_1^{-1/2}
\rer{B} =  \quadS_1^{-1/2} \quadZ_1^* \requadLb ,  \hspace{0.2cm}
\rer{C} =  \requadcU^T \quadY_1   \quadS_1^{-1/2} ,  \hspace{0.2cm}  \text{and} \hspace{0.2cm}
\rer{D} = {\bD_0} .
\end{align*}	
\vspace{-1em}
\STATE		 Get the \ROM by using the reciprocal transformation onto the intermediate \ROM, i.e.,
\begin{align*}
\		\qdr{\bA}_r = \rer{A}^{-1} ,  \hspace{0.2cm}
		\qdr{\bA}_r = \rer{A}^{-1} ,  \hspace{0.2cm}
		\qdr{\bC}_r = - \rer{C} \rer{A}^{-1}  ,  \hspace{0.2cm}  \text{and} \hspace{0.2cm}
\qdr{\bD}_r = \rer{D} - \rer{C} \rer{A}^{-1} \rer{B} .
\end{align*}		
	\end{algorithmic}
\end{algorithm}
}

\subsection{Connections of \QSPA to the Loewner framework} \label{subsec:loewner}

In this section, we aim at revealing some connections that the newly-developed method, \QSPA, bears with the Loewner framework in \cite{ajm07}. Note that a connection of \QBT and the Loewner framework has already been made in \cite[Section 3.4]{gosea2022data}.

First, by comparing \Cref{prop:quadSPA} and \Cref{prop:quadMatrices}, it can be shown that the data matrix $\requadLL$ used in \QSPA and the $\quadLL$ used in \QBT are exactly the same. They have the interpretation of a diagonally scaled Loewner matrix.
The other data matrices, i.e., $\quadMM$ in \QBT  and $\requadMM$ in \QSPA relate to (scaled) shifted Loewner matrices. In \QBT, the shifts are given by $\imunit\omega_k$, $1\leq k\leq \np$, while in \QSPA, there one can find the 'inverted shifts' $(\imunit\omega_k)^{-1}$ instead.

This new context for \QSPA implies a few observations. First, \QSPA has a similar computational complexity as the Loewner framework. Choosing more data points $\np$ will, in general, imply increasing closeness to the intrusively computed \ROM, but will also increase the dimension of the data matrices. Second, the quality of the \ROM strongly depends on the quality of the data. When using data sets with perturbed  (noisy) measurements, or that contain redundant data (not rich enough to capture the essential properties), the \QSPA faces similar challenges as the Loewner framework. We intend to study these aspects in more detail, for future works.

As was pointed out in preceding publications, the numerical conditioning of Loewner matrices can be a relevant issue. While the application of quadrature weights (as done in \QBT and \QSPA) may improve the conditioning, regularization strategies might still be necessary. Of course, the SVD is an essential step that acts as implicit regularizer, removing the redundancies, provided the truncation order is small enough. Other possible strategies are described in \cite{binder2022texttt}, while the robustness of the Loewner approach to noise or perturbations, was investigated in \cite{zhang2021factorization,Drmac2022}. 

Also, we note that \Cref{alg:quadspq_new} requires a complex SVD, which implies that the resulting \ROM realization is, in general complex-valued. However, under the additional assumption that the quadrature points used in the approximation of either Gramian is chosen in complex-conjugate pairs, a real-valued \ROM can be enforced. This can be done in a similar manner as was proposed in \cite[Section A.1]{AntBG20} for the Loewner framework, and in \cite[Section 4.1]{gosea2022data} for \QBT.


\section{Numerical study for the data-driven implementation} \label{sec:Numerics2}

In this section, we compare the performance of the newly-proposed quadrature-based, data-driven implementation of \SPA according to \Cref{alg:quadspq_new}, i.e., \QSPA, with the following model reduction methods from the literature: the intrusive  \texttt{MATLAB} built-in implementations of Singular Perturbation Analysis (\SPA) and Balanced Truncation (\BT), and  the quadrature-based implementation of Balanced Truncation, i.e., \QBT from \cite{gosea2022data}, realized by \Cref{alg:quadbt}.

We consider two benchmark examples, one of low dimension and one of higher dimension; both are available in the MOR-Wiki\footnote{\url{https://morwiki.mpi-magdeburg.mpg.de/morwiki/index.php/Category:Benchmark}} database, cf.~\cite{book:dimred2003}:
\begin{enumerate}
	\item 	
	The \build system models a motion problem in the Los Angeles University hospital building, with 8 floors each
	having 3 degrees of freedom, namely displacements in the $x$ and $y$ directions, and rotation. The model has $n=48$ states, $m=1$ input, and $p=1$ output. We refer to \cite{morChaV02} for more details.

\item 
	The \iss system models the structural response of the Russian Service 12A Module of the International Space Station (ISS). The model has $n=1412$ states, $m=3$ inputs, and $p=3$ outputs. We refer to \cite{morGugAB01} for more details. 
\end{enumerate}

For all experiments, the quadrature weights required for \QSPA and \QBT are determined according to the given transfer function measurements and the trapezoid quadrature rule. Other choices of quadrature rules are, of course, possible; we refer, e.g., to \cite{benner2010balanced,BERTRAM202166,gosea2022data}, which consider other more involved quadrature schemes.
Extended details are given below but can also be directly comprehended from the \texttt{MATLAB} code used to generate the numerical results, provided in \cite{code:LiGoSPA}.
\subsection{\build model} \label{sec:Numerics21}

In the first experiment, we choose $\np =100$ logarithmically-spaced distributed (sampling) points in the range $\left[10^{0}, 10^2\right] \cdot \imath$. These will act as quadrature nodes. We compare the results of applying the two data-based methods against their intrusive counterparts, i.e., \SPA and \BT. We fix $r=18$ as the dimension of all four {\ROM}s, for the first experiment. In the left pane of~\Cref{fig:fig1}, we depict the magnitude of the frequency response for the original system and approximation errors for the 2 {\ROM}s computed with \SPA and \QSPA. In the right pane of~\Cref{fig:fig1}, we depict the magnitude of the frequency response for the original system and approximation errors for the two {\ROM}s computed with \BT and \QBT. We notice that the \SPA-based methods provide worse approximation quality in the higher frequency range, while the \BT-based methods provide worse approximation quality in the lower frequency range. Additionally, it is to be noticed that the error curve corresponding to $\QSPA$ faithfully reproduces that of $\SPA$. The same can be said about $\BT$.

\begin{figure}[h tb]
\includegraphics[scale= 0.4]{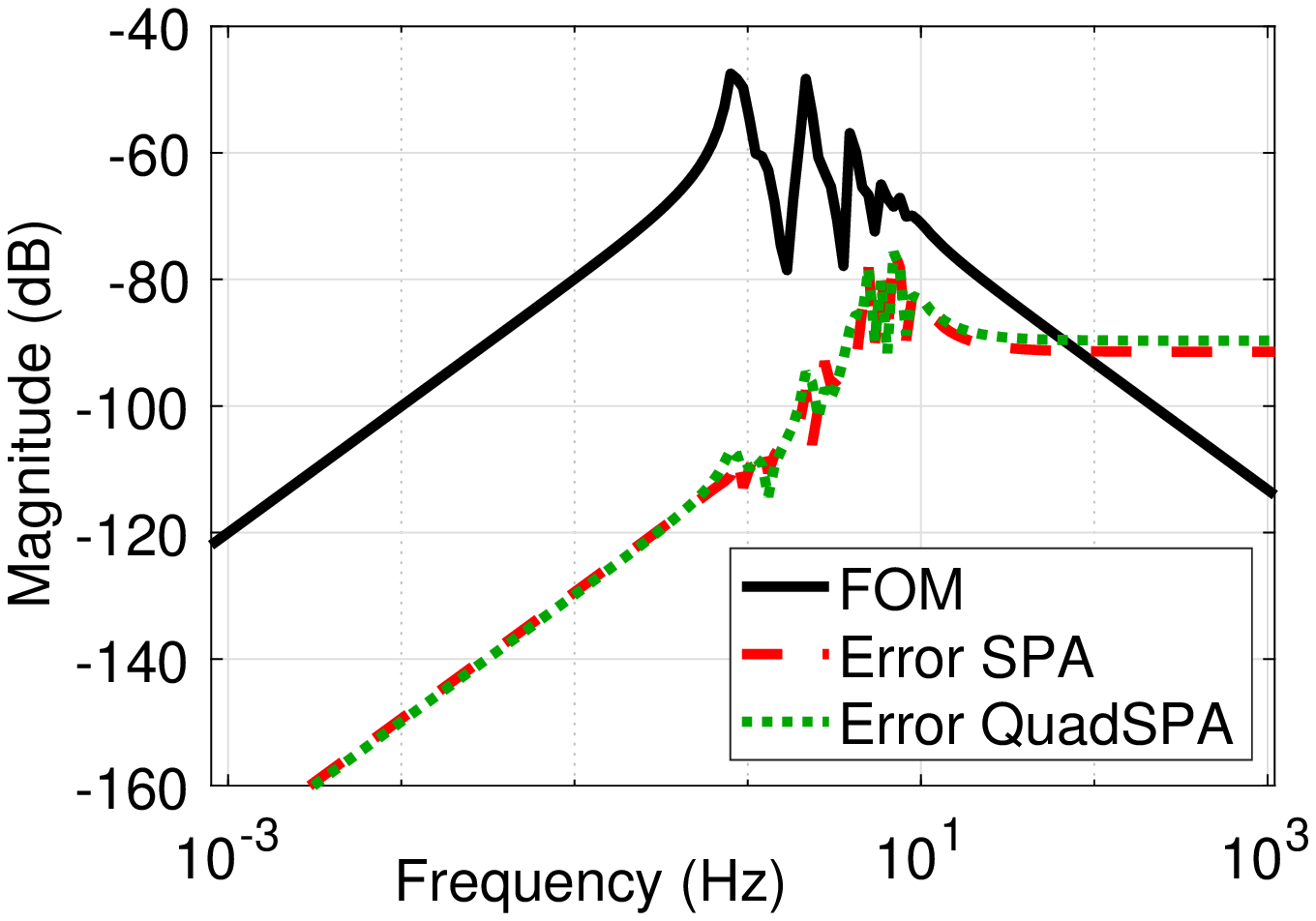} \hspace{1.5cm}
\includegraphics[scale= 0.4]{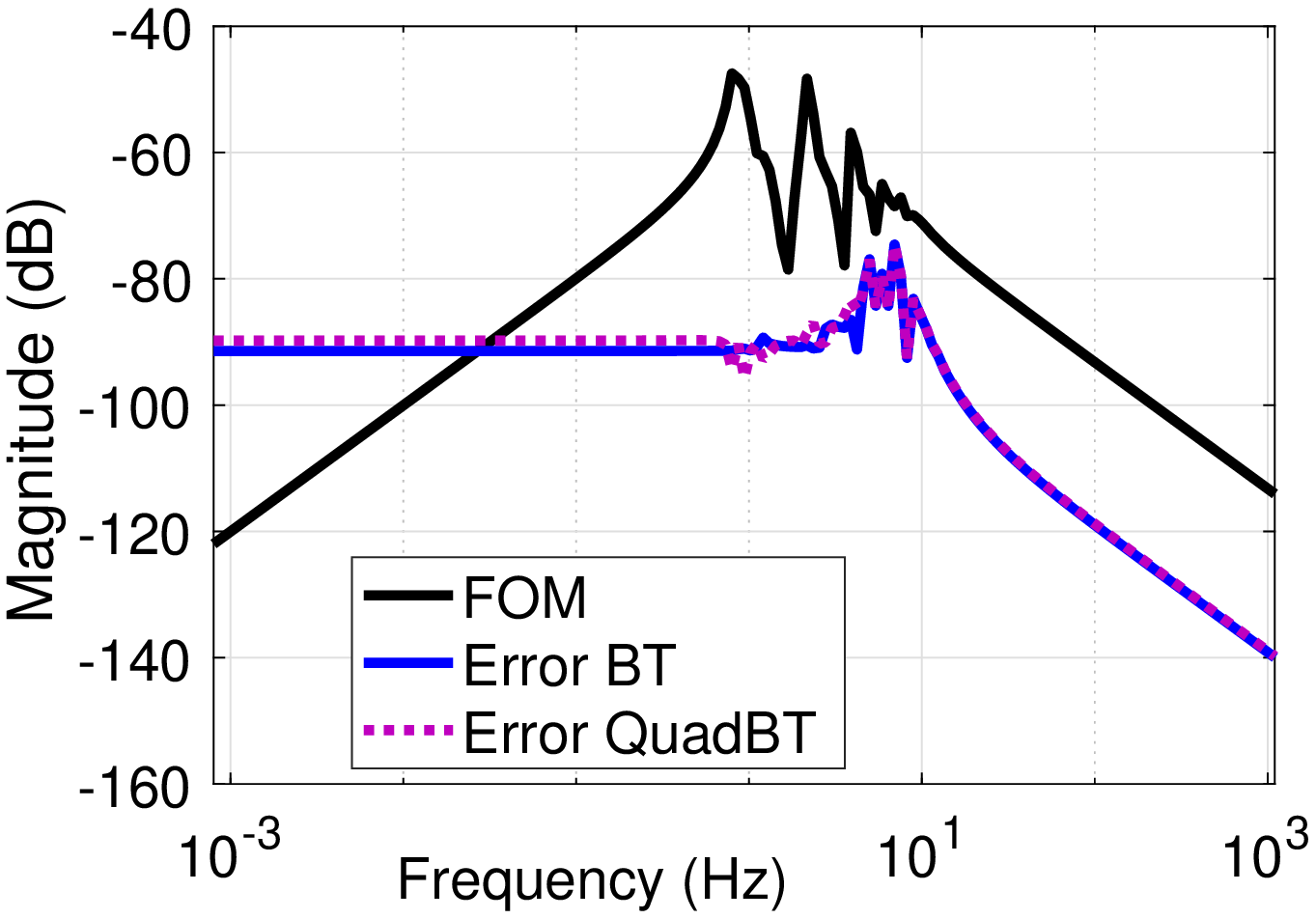} \hspace{1.5cm}
	\caption{\build. Frequency response of \FOM and the errors using \SPA and \QSPA (left) and using \BT and \QBT (right); $\np = 100$ and $r=18$ is chosen for all {\ROM}s.}
	\label{fig:fig1}
\end{figure}

In the next experiment, we vary the dimension $r$ of the \ROM for all four methods in increments of $6$. The number of data points is fixed to the same value as before, i.e., $\np = 100$. We compute the $\cH_{\infty}$ norm of the error systems produced by the four model reduction methods, scaled by the $\cH_{\infty}$ norm of the original system. The numerical results are shown in~\Cref{tab:table1}. As expected, the approximation errors decrease as $r$ increases. Additionally, the quadrature-based methods produce errors comparable to the intrusive counterparts. Actually, for $r \in \{18,24\}$, the $\QSPA$ method outperforms $\SPA$ in term of the $\cH_{\infty}$ norm approximation. However, for $r=12$, the error provided by $\QSPA$ is much higher than that of $\SPA$ (due to a pair of rogue poles), and a similar behavior is noticed for $r=30$, both for $\QSPA$ and for $\QBT$.

\begin{table}[h]
	\centering
	\caption{\build. Relative approximation errors in the $\cH_{\infty}$ norm for varying \ROM order $r$ (\QSPA and \QBT, with $\np=100$).}
	\label{tab:table1}
	\small
	\begin{tabular}{ |p{1.5cm}||p{1.7cm}|p{1.7cm}|p{1.7cm}|p{1.7cm}| p{1.7cm}| }
		\hline
		& $r = 6$ & $r = 12$ & $r = 18$ & $r = 24$  & $r = 30$ \\
		\hline
		\SPA   & $2.4185 \cdot 10^{-1}$ & $ 9.3060 \cdot 10^{-2}$ & $3.7846 \cdot 10^{-2}$ & $1.0922 \cdot 10^{-2}$ & $9.0847 \cdot 10^{-4}$\\
		\hline
		\QSPA &  $2.4971 \cdot 10^{-1}$ &  $5.7480 \cdot 10^{-1}$ &  $3.6713 \cdot 10^{-2}$ & $1.0836 \cdot 10^{-2}$ & $ 3.9822 \cdot 10^{-3}$\\
		\hline
		\BT  &  $ 2.3084 \cdot 10^{-1}$ &  $1.0317 \cdot 10^{-1}$ &  $3.8312 \cdot 10^{-2}$ & $ 1.0613 \cdot 10^{-2}$ & $ 9.4410 \cdot 10^{-4}$\\
	    \hline
		\QBT &  $2.7935 \cdot 10^{-1}$ &  $1.0442 \cdot 10^{-1}$ &  $3.8193 \cdot 10^{-2}$ & $1.0285 \cdot 10^{-2}$ & $  4.5524 \cdot 10^{-3}$\\
		\hline
	\end{tabular}
	
\end{table}

What we generally observe, is that a higher reduction order $r$ requires more quadrature nodes $\np$ in \QSPA and \QBT in order to have a similar fidelity as the intrusive methods \SPA and \BT, respectively.
For example, in the latter experiment, we require about $\np = 300$ points (three times more than before) to ensure that the quadrature-based methods reproduce the quality of the intrusive methods for $r=30$. With this choice of parameters, the relative $\mathcal{H}_\infty$ error for \QSPA is $9.3103 \cdot 10^{-4}$ for \QSPA and $ 7.9862 \cdot 10^{-4}$ for \QBT, which is very close to the errors of \SPA and, interestingly, even slightly better than for \BT, respectively, cf.\,\Cref{tab:table1}.


For the next experiment, we vary instead the $\np$ parameter between $[20,100]$. We fix the dimension of the {\ROM}s as $r = 18$. We compute again the relative approximation errors in the $\cH_\infty$ norm, for the two quadrature-based methods. The numerical results are shown in~\Cref{tab:table2}. As expected, for increasing values of $\np$, the relative approximation errors are decreasing for both $\QSPA$ and $\QBT$ and typically, they reach the quality of their intrusive counterparts (\SPA and \BT). Notably, the quadrature-based methods can lead to better results in exceptional cases. This is observed here for $\np = 100$. However, this does not guarantee, by no means, that for $\np>100$, the same trend is valid; e.g., when choosing $\np = 120$, we noticed that \BT outperforms \QBT.

\begin{table}[h!]
	\centering
	\caption{\build. Relative approximation errors in the $\cH_{\infty}$ norm for varying quadrature points $\np$, compared to relative errors of intrusive {\ROM}s ($r = 18$ for all {\ROM}s).}
	\label{tab:table2}
	\small
	\begin{tabular}{ |p{1.5cm}||p{1.7cm}|p{1.7cm}|p{1.7cm}|p{1.7cm}| p{1.7cm}| }
		\hline
		& $\np = 20$ & $\np = 30$ & $\np = 50$ & $\np = 70$  & $\np = 100$ \\
		\hline
		\QSPA   & $3.4571 \cdot 10^{-1}$ & $3.2915 \cdot 10^{-1}$ & $6.0762 \cdot 10^{-1}$ & $7.0414 \cdot 10^{-2}$ & $3.6713 \cdot 10^{-2}$\\
		\hline
		\QBT &  $7.9048 \cdot 10^{-1}$ &  $3.8459 \cdot 10^{-1}$ &  $1.9527 \cdot 10^{-1}$ & $9.9991 \cdot 10^{-2}$ & $ 3.8193 \cdot 10^{-2}$\\
		\hline
	\end{tabular}
		\small
\begin{tabular}{ |p{3.2cm}||p{3.2cm}|}
	\hline
 \SPA: $3.7846 \cdot 10^{-2}$ &   \BT: $ 3.8312 \cdot 10^{-2}$  \\
	\hline
\end{tabular}	
\end{table}

\vspace{-2mm}	


\subsection{\iss model} \label{sec:Numerics22}

For the \iss model, we first choose $\np = 1000$ nodes, chosen to be logarithmically-spaced in $[10^{-1},10^2] \cdot \imath$. Since the \FOM has $3$ inputs and $3$ outputs, the frequency-response plot of it is composed of $3$ curves, i.e., corresponding to the $3$ singular values of $\bH(\imath \omega_j) \in \IC^{3 \times 3}$, for any frequency point $\omega_j$. The reduction order of all {\ROM}s is fixed at $r = 100$. 

The frequency response plots of the \FOM and the reduction errors related to four {\ROM}s are depicted in \Cref{fig:fig3}. We notice a good agreement between all responses and a comparable approximation quality for the intrusive and non-intrusive methods. Moreover, as to be expected, the \SPA and \QSPA methods lead to an improved fidelity in the low frequency range, whereas the errors for \BT and \QBT are specifically small for high frequencies.

\begin{figure}[tbh!]
\includegraphics[scale= 0.4]{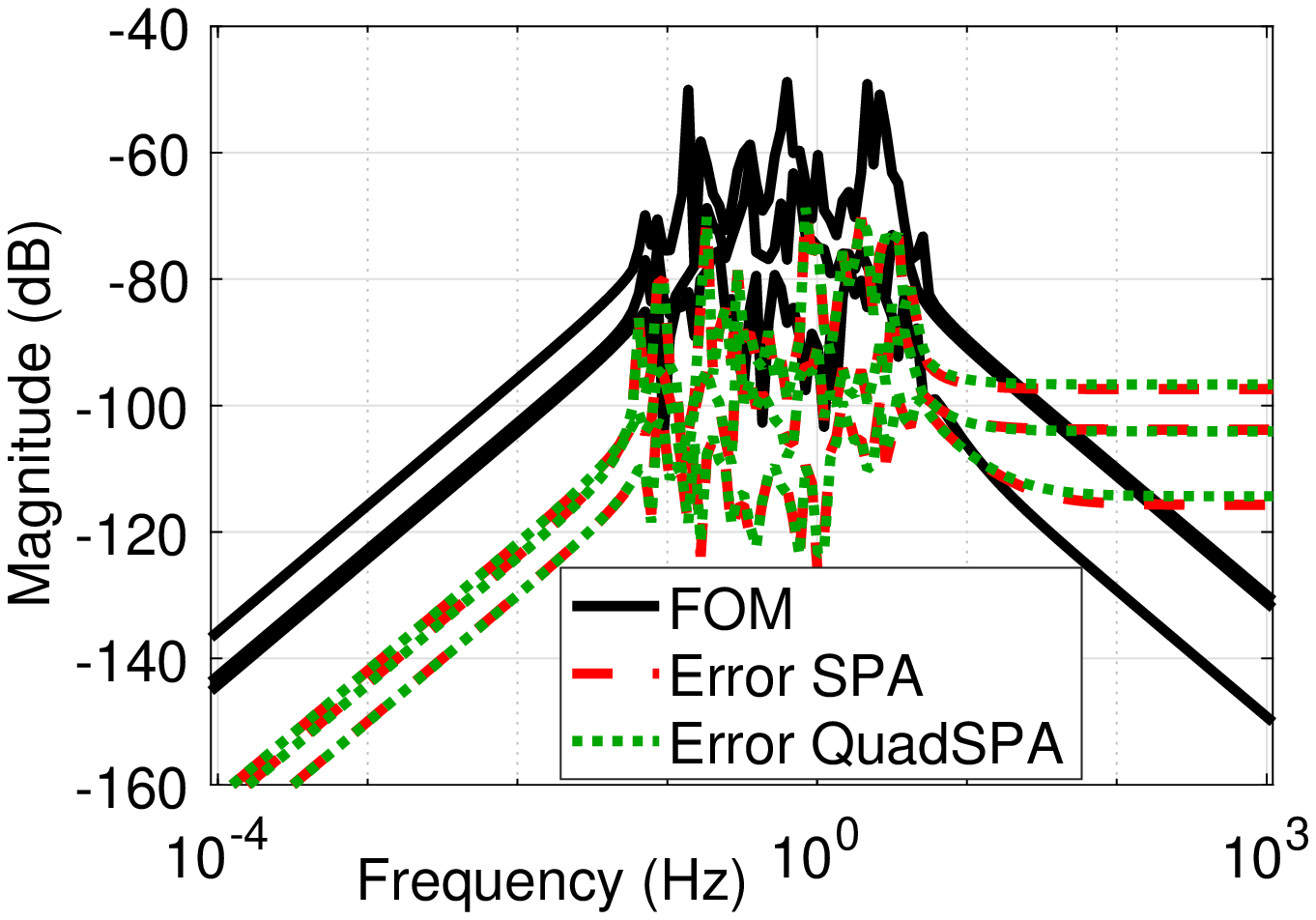} \hspace{1.5cm}
\includegraphics[scale= 0.4]{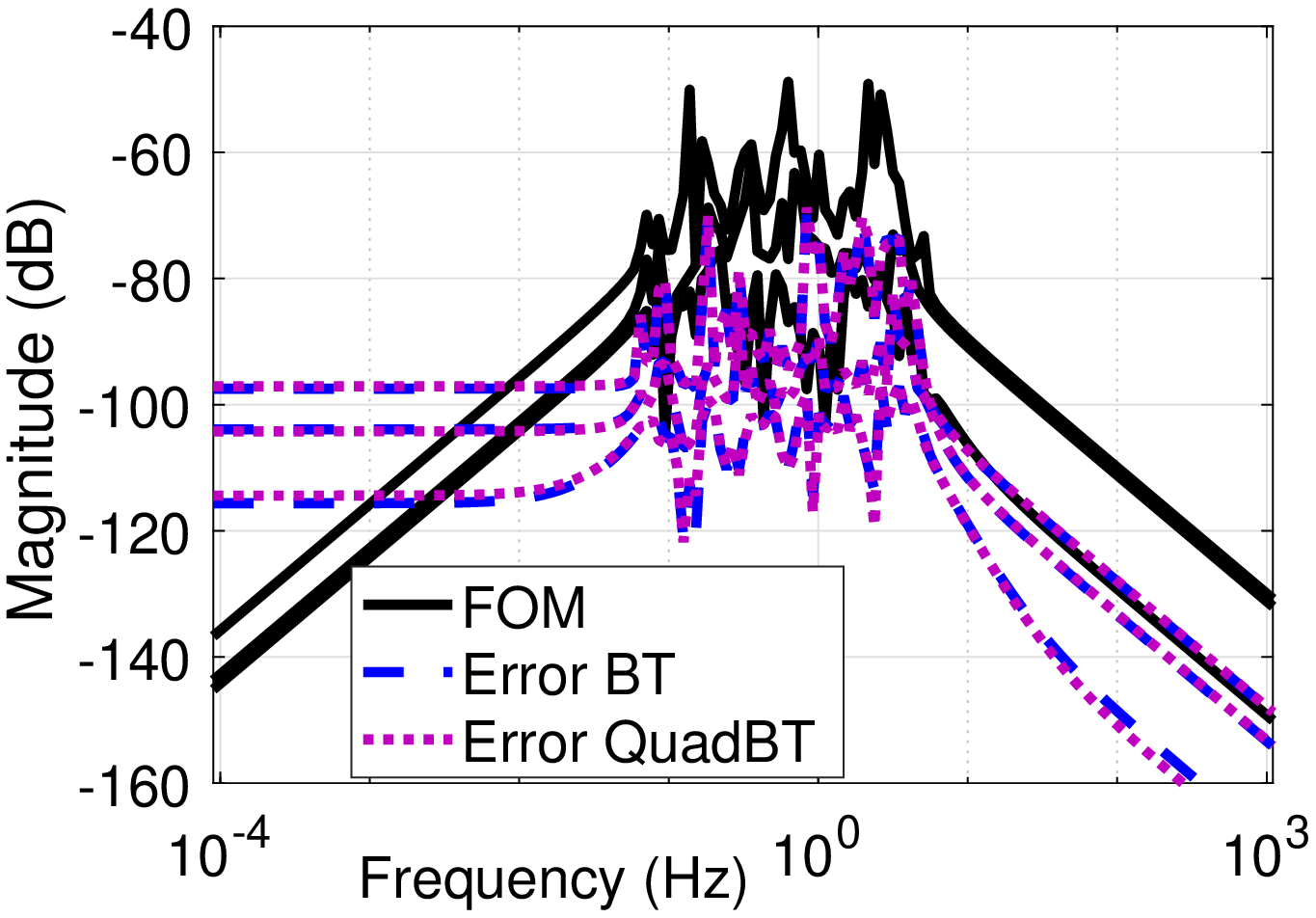}\\[0cm]
	\caption{\iss. Frequency response of \FOM and the errors using \SPA and \QSPA (left) and using \BT and \QBT; $\np = 1000$ and $r=100$ for all {\ROM}s.}
	\label{fig:fig3}\hspace{1.8cm}
\end{figure}

Then, in~\Cref{fig:fig4} we depict the configuration of the system poles for the \FOM, and for the two {\ROM}s based on quadrature approximations. The dominant poles of the original system seem to be well matched in both cases. Moreover, we would like to mention that the poles in the {\ROM}s occur in complex-conjugated pairs, which is in line with the realness of the fitted models.

\begin{SCfigure}
\raggedleft
\caption{\iss. Distribution of system poles for \QBT and \QSPA; $r = 100$ and $\np = 1000$ were used.\label{fig:fig4}}
\includegraphics[scale = 0.4]{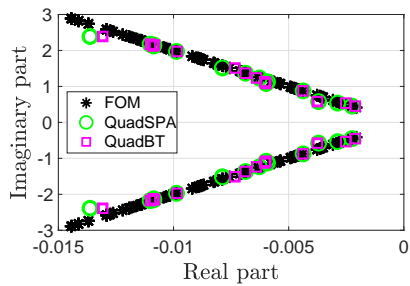}%
\end{SCfigure}

In the next experiment, we vary the number of points $\np$ between $600$ and $1\,000$, and we compute the relative approximation errors in the $\cH_{\infty}$ norm between the original system and the two {\ROM}s, i.e., computed by means of \QSPA and \QBT. As expected, the approximation error decreases as $\np$ increases, as shown in~\Cref{tab:table4}.

\begin{table}[h!]
	\centering
	\caption{\iss. Relative approximation errors in the $\cH_{\infty}$ norm for varying number $\np$ of quadrature points (\ROM dimension $r = 100$).}
	\label{tab:table4}
	\small
	\begin{tabular}{ |p{1.9cm}||p{1.7cm}|p{1.7cm}| p{1.7cm}| }
		\hline
		& $\np = 600$ & $\np = 800$  & $\np = 1000$ \\
		\hline
	\QSPA  & $5.6244 \cdot 10^{-1}$ & $ 1.6758 \cdot 10^{-1}$ & $3.2221 \cdot 10^{-2}$\\
		\hline
	\QBT  &  $6.0982 \cdot 10^{-1}$ & $4.4826 \cdot 10^{-2}$ & $ 3.2171 \cdot 10^{-2}$\\
		\hline
	\end{tabular}
		\small
\begin{tabular}{ |p{3.2cm}||p{3.2cm}|}
	\hline
	\SPA: $3.3153 \cdot 10^{-2}$ &   \BT: $3.2941 \cdot 10^{-2}$  \\
	\hline
\end{tabular}
\end{table}

Finally, we compute absolute deviations in the $\cH_\infty$ norm between the {\ROM}s computed with intrusive model reduction methods (\SPA and \BT), and their data-based counterparts. We vary $\np$ between $200$ and $1\,000$ and compute the $\cH_\infty$ quantities in~\Cref{tab:table5}. As the number of data points is increased, the deviation between the quadrature-based implementations \QSPA and \QBT and their intrusive implementation generally decreases and becomes negligible compared to the reduction error of the method.


\begin{table}[h!]
	\centering
	\caption{\iss. $\cH_{\infty}$ norm deviations between intrusive and quadrature-based methods for varying $\np$ (\ROM dimension is $r=100$.)}
	\label{tab:table5}
	\small
	\begin{tabular}{ |p{3.1cm}||p{1.7cm}|p{1.7cm}|p{1.7cm}|p{1.7cm}| p{1.7cm}| }
		\hline
		& $\np = 200$ & $\np = 600$ & $\np = 800$  & $\np = 1000$ \\
		\hline
		\SPA versus \QSPA   & $1.7813 \cdot 10^{-2}$ &  $6.6425 \cdot 10^{-3}$ & $1.9466 \cdot 10^{-3}$ & $2.6942 \cdot 10^{-4}$\\
		\hline
		\BT  versus \,\QBT  &  $2.0562 \cdot 10^{-2}$ &   $7.1800 \cdot 10^{-3}$ & $4.3901 \cdot 10^{-4}$ & $ 2.9588 \cdot 10^{-4}$\\
		\hline
	\end{tabular}
\end{table}


\section{Conclusion} 
\label{sec:Conclusion}

This paper addressed shortcomings and issues corresponding to the practical usability of \SPA. Specifically, two new algorithmic implementations of this model order reduction method were proposed with different use cases in mind. The first one was a low-rank implementation that can be used for the reduction of systems much larger than the existing ones based on dense solvers. Notably, our \SPA implementation is of about the same computational complexity as the state-of-the-art algorithms for \BT.
Secondly, we derived a data-driven, non-intrusive reinterpretation of \SPA that is based on a quadrature approximation requiring solely input-output data. Since these data could also be obtained by measurements, the algorithm can be considered realization-free. It bears connections to the data-driven reinterpretation of \BT from \cite{gosea2022data} and thus behaves, in some sense, similarly. For example, the choice of data shows to be crucial for the approximation quality, similarly as it is also for the Loewner framework.
The findings of this paper were validated by several numerical tests, which illustrated the good correspondence between our new contributions for \SPA and the well-established results for \BT. Future research endeavors could include the study of generalized versions of \SPA with similar techniques as the ones used in this paper, both for the intrusive and the data-driven setting.

\section*{Acknowledgments}%
The authors would like to thank Peter Benner for providing insightful comments that helped to improve the manuscript. Moreover, the first author would like to thank for the support of the DFG research training group 2126 on algorithmic optimization.

\appendix

\section{Proof of \Cref{prop:quadMatrices}} \label{app:proofQBT}

For completeness, and since the proof of our main result \Cref{prop:quadSPA} follows using similar arguments, we state a proof for \Cref{prop:quadMatrices} in the following; cf.\,\cite{gosea2022data}.

Let the assumptions of the proposition hold, and let $\Kr: \IC \to \IC^{n,n}$, be given by $\Kr(s) = (s \bI - \bA)^{-1}$, so that $\spTF(s) = \bC \Kr(s) \bB$, as well as
\small
\begin{equation*}
\quadL^* = \left[ \begin{matrix}
\rho_1 \bC \Kr(\imunit \omega_1) \\
\vdots \\ \rho_\np \bC \Kr(\imunit \omega_\np)
\end{matrix}  \right] \quad  \text{and} \quad
\quadU = \left[ \,
\rho_1 \Kr(\imunit \omega_1)   \bB \ \ 
\cdots \quad \rho_\np  \Kr(\imunit \omega_\np) \bB  \right].
\end{equation*}
\normalsize
A direct calculation shows that
\small
\begin{align*}
\Kr(\imunit \omega_k) \Kr(\imunit \qpo_j) &= \frac{1}{\imunit \omega_k - \imunit \qpo_j} \Kr(\imunit \omega_k) \left[ (\imunit \omega_k \bI - \bA) -  ( \imunit \qpo_j\bI - \bA)  \right] \Kr( \imunit \qpo_j)\\
 &=  - \frac{1}{\imunit \omega_k -  \imunit \qpo_j} \left( \Kr(\imunit \omega_k)-\Kr(\imunit \qpo_j) \right).
\end{align*}
\normalsize
By the latter, we conclude
\small
\begin{align*}
	\quadLL_{k,j} &= \quadL_k \quadU_j = \rho_k \qwe_j\bC \Kr(\imunit \omega_k) \Kr(\imunit \qpo_j) \bB = - \frac{\rho_k \qwe_j}{\imunit \omega_k -  \imunit \qpo_j} \left( \bC \Kr(\imunit \omega_k) \bB- \bC \Kr(\imunit \qpo_j) \bB \right) \\
	&= - \rho_k \qwe_j	\displaystyle \frac{\spTF(\imunit\omega_k) - \spTF(\imunit\qpo_j)}{\imunit\omega_k - \imunit \qpo_j}.
\end{align*}
\normalsize
Similarly, by using
\small
\begin{align*}
\Kr(\imunit \omega_k) \bA \Kr(\imunit \qpo_j) &= \frac{1}{\imunit \omega_k - \imunit \qpo_j} \Kr(\imunit \omega_k) \left[ \imunit \qpo_j(\imunit \omega_k \bI - \bA) -  \imunit \omega_k ( \imunit \qpo_j\bI - \bA)  \right] \Kr( \imunit \qpo_j)\\
&=  - \frac{1}{\imunit \omega_k -  \imunit \qpo_j} \left( \imunit \omega_k \Kr(\imunit \omega_k)-\imunit \qpo_j\Kr(\imunit \qpo_j) \right),
\end{align*}
\normalsize
we can write
\small
\begin{align*}
\quadMM_{k,j} &= 
\rho_k \qwe_j\bC \Kr(\imunit \omega_k) \bA \Kr(\imunit \qpo_j) \bB = - \frac{\rho_k \qwe_j}{\imunit \omega_k -  \imunit \qpo_j} \left( \imunit \omega_k \bC \Kr(\imunit \omega_k) \bB - \imunit \qpo_j\bC \Kr(\imunit \qpo_j) \bB \right) \nonumber \\
&= - \rho_k \qwe_j	\displaystyle \frac{\imunit\omega_k \spTF(\imunit\omega_k) - \imunit\qpo_j\spTF(\imunit\qpo_j)}{\imunit\omega_k - \imunit \qpo_j}. 
\end{align*}
\normalsize
Finally, the representations of $\quadLb$ and $\quadcU$ (as data matrices) claimed in the proposition, also follows in a straightforward manner. \hfill $\square$
\section{An extension of \Cref{prop:quadSPA}} \label{app:QuadBalancing}
The assumption $\omega_k \neq \qpo_j$ for $k,j \in \{1,\ldots,\nq\}$ can be omitted in \Cref{prop:quadSPA}. However, in that cases, samples corresponding to the derivative of the transfer function are required.

The required adaptions concern the data matrices $\requadLL$ and $\requadMM$ only, given as
\small
\begin{align*} 
	\requadLL_{k,j} &= 
	\begin{cases} - \rho_k \qwe_j 
	\displaystyle \frac{\TFzer(\imunit\omega_k) - \TFzer(\imunit\qpo_j)}{\imunit\omega_k -  \imunit \qpo_j} &  \omega_k \neq \qpo_j	\\
     -\rho_k \qwe_j  \frac{d}{ds}\TFzer(s)_{s=\imunit\qpo_j} &  \omega_k = \qpo_j, 
	\end{cases}
	\\
	\requadMM_{k,j} &= 
	\begin{cases} 
	- \rho_k \qwe_j \displaystyle \frac{(\imunit\omega_k)^{-1} \TFzer(\imunit\omega_k) - (\imunit\qpo_j)^{-1} \TFzer(\imunit\qpo_j)}{\imunit\omega_k -  \imunit \qpo_j} & \omega_k \neq \qpo_j	 \\
	 -\rho_k \qwe_j \left( (\imunit \qpo_j)^{-1} \frac{d}{ds}\TFzer(s)_{s=\imunit\qpo_j} - (\imunit \qpo_j)^{-2}\TFzer(\imunit\qpo_j) \right)  &  \omega_k =\qpo_j	.
	\end{cases}
	\end{align*}
	\normalsize
In the light of the proof that was presented for \Cref{prop:quadSPA}, it only remains to show the validity of the representations $\requadLL_{k,j}$ and $\requadMM_{k,j}$ for the special case $\omega_k = \qpo_j$. We do this considering the limit $\omega_k \to \qpo_j$ of the representations that have already been shown for $\omega_k \neq \qpo_j$ using L'Hospital's rule.

For $\requadMM_{k,j}$ with $\omega_k = \qpo_j$, the crucial step is
\small
\begin{align*}
	\lim_{\omega_k \to \qpo_j}  \frac{\TFzer(\imunit\omega_k) - \TFzer(\imunit\qpo_j)}{\imunit\omega_k -  \imunit \qpo_j}
	 = \lim_{\omega_k \to \qpo_j}  \frac{ \frac{d}{ds}\left( \TFzer(s) - \TFzer(\imunit\qpo_j)\right)_{s = \imunit \omega_k} }{\frac{d}{ds} \left(s -\imunit \qpo_j \right)_{s= \imunit \omega_k}} = \frac{d}{ds}\TFzer(s)_{s=\imunit\qpo_j}.
\end{align*}
\normalsize
From the latter result, the claimed representation directly follows. Similarly, the expression for $\requadLL_{k,j}$ and $\omega_k = \qpo_j$ can be shown using
\small
\begin{align*}
	\lim_{\omega_k \to \qpo_j}   \frac{(\imunit\omega_k)^{-1} \TFzer(\imunit\omega_k) - (\imunit\qpo_j)^{-1} \TFzer(\imunit\qpo_j)}{\imunit\omega_k -  \imunit \qpo_j}
	  =  \frac{d}{ds}\left(s^{-1} \TFzer(s) \right)_{s=  \imunit \qpo_j} \\
	  =  (\imunit \qpo_j)^{-1} \frac{d}{ds}\TFzer(s)_{s=\imunit\qpo_j} - (\imunit \qpo_j)^{-2}\TFzer(\imunit\qpo_j) .
\end{align*}
\normalsize



\bibliographystyle{plainurl}
\bibliography{LiGoSPA}   

\end{document}